\newtheorem{theorem}{Theorem} 
\newtheorem{lemma}[theorem]{Lemma} 
\newtheorem{proposition}[theorem]{Proposition}
\newenvironment{proof}[1][Proof]
 {\begin{trivlist} \item[\hskip \labelsep {\bfseries #1}]}{\end{trivlist}}   
\newcommand{\qed}{\nobreak \ifvmode \relax \else       \ifdim\lastskip<1.5em \hskip-\lastskip       \hskip1.5em plus0em minus0.5em \fi \nobreak       \vrule height0.75em width0.5em depth0.25em\fi}
\date{}
\begin{document}

\title{The root distribution of polynomials with a three-term recurrence}

\author{Khang Tran\\
Department of Statistics\\
Truman State University, USA}
\maketitle
\begin{abstract}
For any fixed positive integer $n$, we study the root distribution
of a sequence of polynomials $H_{m}(z)$ satisfying the rational generating
function
\[
\sum_{m=0}^{\infty}H_{m}(z)t^{m}=\frac{1}{1+B(z)t+A(z)t^{n}}
\]
where $A(z)$ and $B(z)$ are any polynomials in $z$ with complex
coefficients. We show that the roots of $H_{m}(z)$ which satisfy
$A(z)\ne0$ lie on a specific fixed real algebraic curve for all large
$m$. 
\end{abstract}

\section{Introduction}

The sequence of polynomials $H_{m}(z)$, generated by the rational
function $1/(1+B(z)t+A(z)t^{n})$, has the three-term recurrence relation
of degree $n$
\begin{equation}
H_{m}(z)+B(z)H_{m-1}(z)+A(z)H_{m-n}(z)=0\label{eq:recurrence}
\end{equation}
and the initial conditions
\begin{equation}
H_{m}(z)=(-1)^{m}B^{m}(z),\,0\le m<n.\label{eq:initialcond}
\end{equation}
For the study of the root distribution of other sequences of polynomials
that satisfy three-term recurrences, see \cite{cc,hs}. In \cite{tran},
the author shows that in the three special cases when $n=2$, $3$,
and $4$, the roots of $H_{m}(z)$ which satisfies $A(z)\ne0$ will
lie on the curve $\mathcal{C}$ defined in Theorem \ref{maintheorem},
and are dense there as $m\rightarrow\infty$. This paper shows that
for any fixed integer $n$, this result holds for all large $m$ in
the theorem below. 

\begin{theorem}\label{maintheorem}

Let $H_{m}(z)$ be a sequence of polynomials whose generating function
is 
\[
\sum_{m=0}^{\infty}H_{m}(z)t^{m}=\frac{1}{1+B(z)t+A(z)t^{n}}
\]
where $A(z)$ and $B(z)$ are polynomials in $z$ with complex coefficients.
There is a constant $C=C(n)$ such that for all $m>C$, the roots
of $H_{m}(z)$ which satisfy $A(z)\ne0$ lie on a fixed curve $\mathcal{C}$
given by
\[
\Im\frac{B^{n}(z)}{A(z)}=0\qquad\mbox{and}\qquad0\le(-1)^{n}\Re\frac{B^{n}(z)}{A(z)}\le\frac{n^{n}}{(n-1)^{n-1}}
\]
and are dense there as $m\rightarrow\infty$. 

\end{theorem}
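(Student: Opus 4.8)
The plan is to collapse the two variables into one and then study the zeros of a single family of one–variable polynomials. Since $H_m$ is generated by $1/(1+Bt+At^n)$, the substitution $t=s/B(z)$ (valid where $B(z)\ne 0$) turns the denominator into $1+s+(A/B^n)s^n$, whence $H_m(z)=B(z)^m\,G_m\!\bigl(A(z)/B(z)^n\bigr)$, where $G_m(u):=[w^m](1+w+uw^n)^{-1}$ is a polynomial in $u$ of degree $\lfloor m/n\rfloor$ with \emph{real} coefficients, explicitly $G_m(u)=\sum_i(-1)^{m-(n-1)i}\binom{m-(n-1)i}{i}u^i$. Because $G_m(0)=(-1)^m\ne 0$, a zero $z_0$ of $H_m$ with $B(z_0)\ne 0$ automatically satisfies $A(z_0)\ne 0$ and corresponds exactly to $A(z_0)/B(z_0)^n$ being a (necessarily nonzero) root of $G_m$; the finitely many points with $B(z_0)=0$ I would treat directly from the recurrence (there, for $A(z_0)\ne0$, one has $H_m(z_0)=0$ iff $n\nmid m$, and every such point lies on $\mathcal C$ since $B^n/A=0$). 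Finally, for $u\ne0$ the condition $\Im(B^n/A)=0$ is equivalent to $u=A/B^n\in\mathbb R$, so $\mathcal C$ pulls back under $z\mapsto A(z)/B(z)^n$ to the real ray $R=\{u\in\mathbb R:\ (-1)^n u\ge (n-1)^{n-1}/n^n\}$. Thus the whole theorem reduces to: for $m>C(n)$ all zeros of $G_m$ lie on $R$, and they become dense in $R$ as $m\to\infty$.

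Next I would analyze the reciprocal polynomial $Q_u(w)=1+w+uw^n$ with roots $w_1(u),\dots,w_n(u)$, which control $G_m$ through the partial–fraction identity $G_m(u)=\sum_k\bigl((n+(n-1)w_k)\,w_k^{m}\bigr)^{-1}$ (obtained by simplifying $Q_u'(w_k)$ via $uw_k^n=-(1+w_k)$). The structural lemma is that $R$ is precisely the locus where $Q_u$ has two roots of equal \emph{minimal} modulus: writing two equal–modulus roots as $w_2=e^{i\theta}w_1$ and eliminating $w_1$ gives $1/u=(-1)^n\,\dfrac{\sin^n(n\theta/2)}{\sin(\theta/2)\,\sin^{n-1}((n-1)\theta/2)}=:(-1)^n g(\theta)$, which is real; requiring $w_1$ to be of minimal modulus forces $\theta\in(0,2\pi/n)$, with $\theta\to 0$ giving $g\to n^n/(n-1)^{n-1}$ (the regime where the two roots coalesce into the double root $w=-n/(n-1)$) and $\theta\to 2\pi/n$ giving $g\to 0$, i.e.\ $u\to\infty$. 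Establishing the elementary but slightly delicate inequality $g(\theta)\le n^n/(n-1)^{n-1}$ shows the range of $g$ is exactly $[0,n^n/(n-1)^{n-1}]$, recovering $R$. For $u\notin R$ (including nonreal $u$, real $u$ with $(-1)^n u$ small, and $u$ near $\infty$) the same analysis yields a \emph{unique}, simple root $\tau(u)$ of minimal modulus; the behavior at infinity, where all roots are $\approx(-1/u)^{1/n}$ times an $n$th root of unity, is handled by a first–order perturbation that breaks the tie whenever $u$ is off $R$.

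The heart of the argument is a zero count on $R$. Parametrizing $R$ by $\theta\in(0,2\pi/n)$, the two minimal–modulus roots form a conjugate pair $w_1,\overline{w_1}$ with $\arg w_1=-\theta/2$ (so $w_2/w_1=e^{i\theta}$), and $G_m$ collapses to $G_m(u)=\dfrac{2\cos\!\bigl(\arg(n+(n-1)w_1)+m\arg w_1\bigr)}{|n+(n-1)w_1|\,|w_1|^m}+E_m(\theta)$, where $E_m$, contributed by the remaining $n-2$ roots of strictly larger modulus, is exponentially smaller and uniformly controlled on compact subsets of $(0,2\pi/n)$. The phase $P_m(\theta)=\arg(n+(n-1)w_1)-m\theta/2$ is strictly monotone in $\theta$ for large $m$ and sweeps an interval of length $\sim m\pi/n$, so the cosine factor produces sign changes of $G_m$; with careful bookkeeping at the two ends of $R$ (near the double root $w=-n/(n-1)$ and near $u=\infty$, where the two–term picture degenerates and must be replaced by the appropriate local expansion) one shows $G_m$ has at least $\lfloor m/n\rfloor=\deg G_m$ zeros on $R$ once $m>C(n)$. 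Since $G_m$ cannot have more zeros than its degree, all of them then lie on $R$; and because there are $\sim m/n$ of them, confined to $R$ and roughly equispaced in $\theta$, they become dense in $R$. Pulling this back through the nonconstant — hence open — map $z\mapsto A(z)/B(z)^n$ gives the theorem.

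I expect the main obstacle to be exactly this last count: upgrading "$\sim m/n$ zeros on $R$" to "at least $\lfloor m/n\rfloor$ zeros on $R$" needs error estimates for $E_m(\theta)$ that are \emph{uniform} all the way out to both ends of $R$ — including the regime $u\to\infty$ and the regime where the two dominant roots nearly coalesce — together with precise sign information for $G_m$ just inside those ends. It is this uniformity (the constant $C(n)$ must not depend on how close $u$ is to $R$ or to $\infty$) that turns the classical statement "zeros accumulate on $\mathcal C$" into the sharp claim that they actually lie on $\mathcal C$ for every $m>C(n)$.
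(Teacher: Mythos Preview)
Your plan is correct and, at the analytic core, coincides with the paper's: both reduce to a one--parameter problem indexed by $\theta$ (the angle between the two equal--modulus roots of the trinomial), express the relevant quantity via partial fractions over the roots of the trinomial, observe that on the ``curve'' the dominant contribution comes from a conjugate pair, and then count $\lfloor m/n\rfloor$ sign changes by tracking the phase $m\theta/2$, with delicate endpoint analysis.

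Where you genuinely differ is in the reduction. The paper keeps $z$ throughout and introduces the quotients $q_k=t_k/t_0$, developing the $q$--discriminant of $1+Bt+At^n$ (Lemma~5) to relate $B^n/A$ to the angle $\theta$ and a substitution $\zeta=e^{-i\theta}q$ to arrive at the auxiliary polynomial $P_\theta(\zeta)=\zeta^n-\frac{\sin n\theta}{\sin\theta}\zeta+\frac{\sin(n-1)\theta}{\sin\theta}$; the root count on $\mathcal C$ is then pieced together from the $p$ roots of a function $h(\theta)$ on $[0,\pi/n)$, the preimage count under $z\mapsto B^n/A$, and a separate divisibility $B^r\mid H_m$. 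Your substitution $t=s/B$ collapses all of this in one stroke to $H_m=B^m G_m(A/B^n)$ with $G_m$ a real polynomial of degree $\lfloor m/n\rfloor$; the entire statement becomes ``all zeros of $G_m$ lie on the ray $R$'', which is cleaner bookkeeping and makes the exact zero count automatic once you have $\lfloor m/n\rfloor$ sign changes. The price is nil: your $Q_u(w)=1+w+uw^n$ and the paper's $P_\theta$ are the same trinomial in different coordinates, and your $g(\theta)$ is the paper's $\sin^n n\theta/(\sin\theta\,\sin^{n-1}(n-1)\theta)$ under $\theta\mapsto\theta/2$.

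You have correctly located the real work. The paper splits the interval into three regimes (Lemmas~9, 10, 11): $\theta$ bounded away from the ``$u=\infty$'' end, a transitional zone, and the deep end where all $n$ roots of the trinomial have nearly equal modulus. That last regime is exactly the uniformity problem you flag, and the paper's treatment is intricate (a Taylor expansion in $l/m$, an inequality verified by computer for $n<90$, and a separate combinatorial argument for $n\ge 90$ via the identity $\sum_k e_k^a=n(-1)^{a/n}[\,n\mid a\,]$). Your proposal does not yet carry this out; be aware that the two--term approximation \emph{completely fails} there, and one must analyze the full sum. The other endpoint (the double root, $\theta\to 0$) also needs a local expansion to extract the sign (the paper's Lemma~13). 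One small step you pass over---that for $\theta\in(0,2\pi/n)$ the pair $e^{\pm i\theta/2}$ really are the \emph{minimal}--modulus roots---is not automatic; the paper proves the equivalent statement (Lemma~7) via the argument principle applied to $\zeta\mapsto(\zeta^n-1)/(\zeta-1)$.
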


This theorem holds when the numerator of the generating function is
a monomial in $t$ and $z$. For a general numerator, it appears,
in an unpublished joint work with Robert Boyer, that the set of roots
will approach $\mathcal{C}$ and a possible finite set in the Hausdorff
metric on the non-empty compact subsets. For more study of sequences
of polynomials whose roots approach fixed curves, see \cite{boyergoh,boyergoh-1}.
Other studies of the limits of zeros of polynomials satisfying a linear
homogeneous recursion whose coefficients are polynomials in $z$ are
given in \cite{bkw,bkw-1}.

An important trinomial is $y^{m}-my+m-1$. Its fundamental role in
the study of inequalities is pointed out in both \cite{bb} and \cite{hlp}.
This paper shows that a ``$\theta$-analogue'' of this ($\theta=0$)
trinomial is fundamental for the study of polynomials generated by
rational functions whose denominators are trinomials. Some further
information about trinomials is avalable in \cite{dns}. It is also
noteworthy that although there is no really concise formula for the
discriminant of a general polynomial in terms of its coefficients,
there is such a formula for the discriminant of a trinomial \cite[pp. 406--407]{gkz}.
Here we develop, in the fashion of Ismail, a $q$-analogue of this
discriminant formula. This plays a fundamental role in the determination
of the curve $\mathcal{C}$. 

Our main approach is to count the number of roots of $H_{m}(z)$ on
the curve $\mathcal{C}$ and show that this number equals the degree
of this polynomial. This number of roots connects with the number
of quotients of roots in $t$ of the denominator $1+B(z)t+A(z)t^{n}$
on a portion of the unit circle. The plot of these quotients when
$n=6$ and $m=30$ is given in Figure 1. Although the curve $\mathcal{C}$
depends on $A(z)$ and $B(z)$, it will be seen that this plot of
the quotients is independent of these two polynomials. 

For an example of Theorem \ref{maintheorem}, we consider the sequence
of Chebyshev polynomials of the second kind, $U_{n}(x)$, where $B(z)=-2z$
and $A(z)=1$. With $z=x+iy$, the curve $\mathcal{C}$ is given by
$xy=0$ and $0\le4(x^{2}-y^{2})\le4$ which is identical to the real
interval $[-1,1]$. In another example when $A(z)=z^{3}+i$, $B(z)=z$,
and $n=5$, the curve $\mathcal{C}$ is a portion of the curve given
by the Cartesian equation
\[
x\left(2x^{6}y+6x^{4}y^{3}-x^{4}+6x^{2}y^{5}+10x^{2}y^{2}+2y^{7}-5y^{4}\right)=0.
\]
The plot of the roots of $H_{m}(z)$ in the latter example is given
in Figure 2. 

\begin{figure}
\begin{centering}
\includegraphics{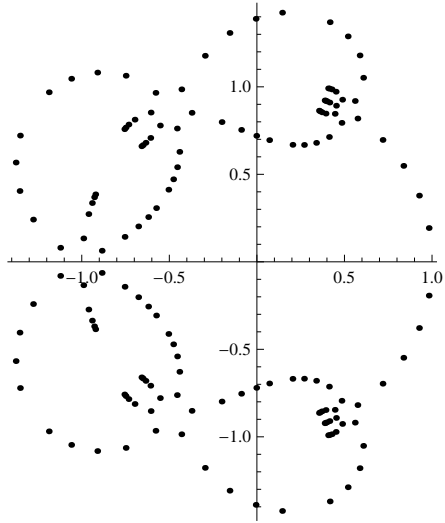}
\par\end{centering}

\protect\caption{Distribution of the quotients of roots of the hexic denominator}
\end{figure}

\begin{figure}
\begin{centering}
\includegraphics{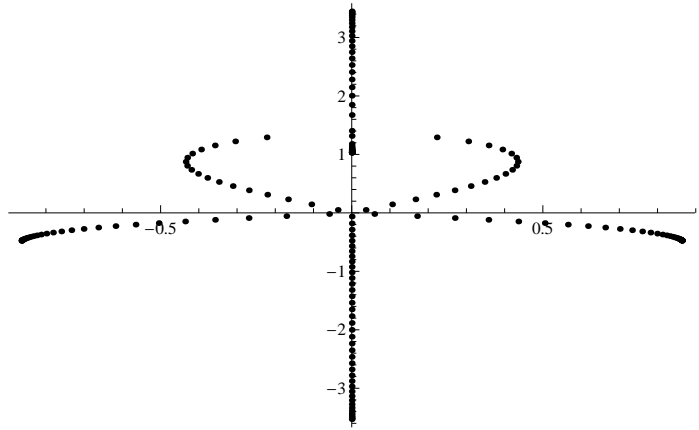}
\par\end{centering}

\protect\caption{Distribution of the roots of $H_{200}(z)$ when $A(z)=z^{3}+i$, $B(z)=z$,
and $n=5$}
\end{figure}

\section{Proof of the theorem}

In this paper, we let $m=np+r$ where $p$ and $r$ are two positive
integers with $0\le r<n$. We also let $a$ and $b$ be the degrees
of the polynomials $A(z)$ and $B(z)$ respectively. 

\begin{lemma}\label{degHz}

The degree of the polynomial $H_{m}(z)$ is at most
\[
\begin{cases}
mb & \mbox{ if }nb>a\\
pa+rb & \mbox{ if }nb\le a
\end{cases}.
\]

\end{lemma}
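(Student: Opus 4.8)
The plan is to obtain a closed-form expression for $H_m(z)$ by expanding the generating function as a geometric series and then reading off degrees term by term. Setting $u=B(z)t+A(z)t^{n}$ and using $1/(1+u)=\sum_{k\ge0}(-1)^{k}u^{k}$ together with the binomial theorem gives
\[
\frac{1}{1+B(z)t+A(z)t^{n}}=\sum_{k=0}^{\infty}(-1)^{k}\sum_{j=0}^{k}\binom{k}{j}B^{k-j}(z)A^{j}(z)\,t^{k+(n-1)j}.
\]
Extracting the coefficient of $t^{m}$ forces $k=m-(n-1)j$, and the constraint $0\le j\le k$ becomes $0\le j\le\lfloor m/n\rfloor$; hence
\[
H_{m}(z)=\sum_{j=0}^{\lfloor m/n\rfloor}(-1)^{m-(n-1)j}\binom{m-(n-1)j}{j}A^{j}(z)B^{m-nj}(z).
\]
(Alternatively, one can verify this identity by checking that its right-hand side satisfies the recurrence \eqref{eq:recurrence} and the initial conditions \eqref{eq:initialcond}.)

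With this formula in hand, the $j$-th summand is a polynomial in $z$ of degree at most $ja+(m-nj)b=mb+j(a-nb)$. I would then split into the two cases in the statement. If $nb>a$, the quantity $mb+j(a-nb)$ is strictly decreasing in $j$, so every summand has degree at most $mb$ (attained at $j=0$), and therefore $\deg H_{m}(z)\le mb$. If $nb\le a$, the same quantity is non-decreasing in $j$, so the maximal degree occurs at the largest admissible index $j=\lfloor m/n\rfloor=p$ (using $m=np+r$ with $0\le r<n$), giving degree at most $pa+(m-np)b=pa+rb$. In both cases the phrase ``at most'' absorbs any cancellation among the leading coefficients of the individual summands.

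There is no serious obstacle here; the only care needed is the combinatorial bookkeeping in the coefficient extraction — pinning down that $j$ ranges exactly over $0\le j\le\lfloor m/n\rfloor$ — and noting that the boundary case $nb=a$ is consistent with both branches, since then $pa+rb=(np+r)b=mb$. A minor point worth stating explicitly is that $A^{j}(z)B^{m-nj}(z)$ genuinely has degree $ja+(m-nj)b$, being a product of nonnegative powers of $A$ and $B$, so the per-term degree bound is exact even though the resulting bound on $\deg H_{m}(z)$ need not be.
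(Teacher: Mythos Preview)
Your argument is correct. The paper's own proof is a one-liner: it simply appeals to induction via the recurrence \eqref{eq:recurrence} and initial conditions \eqref{eq:initialcond}, using that $\deg H_m\le\max(b+\deg H_{m-1},\,a+\deg H_{m-n})$ and checking that the claimed bound is preserved. Your route is different in form: you derive the explicit closed formula
\[
H_{m}(z)=\sum_{j=0}^{\lfloor m/n\rfloor}(-1)^{m-(n-1)j}\binom{m-(n-1)j}{j}A^{j}(z)B^{m-nj}(z)
\]
from the geometric-series expansion and then bound the degree summand by summand. This is slightly more work up front but yields more information; in fact the paper performs exactly this same expansion later, in the proof of Lemma~\ref{zerosB}, to show $B^{r}\mid H_{m}$. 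So your approach anticipates that computation and makes both lemmas immediate from a single identity, whereas the paper's inductive proof of the present lemma is shorter but postpones the expansion. Either way the content is elementary; nothing is missing in your argument.
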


\begin{proof}

The lemma follows from induction and the definition of $H_{m}(z)$
in (\ref{eq:recurrence}) and (\ref{eq:initialcond}). 

\end{proof}

For each $z$, we let $t_{k}=t_{k}(z)$, $0\le k<n$, be the roots
of the denominator $1+B(z)t+A(z)t^{n}$. We will focus our attention
on the quotients of roots $q_{k}=t_{k}/t_{0}$, $0\le k<n$ because
in Lemma \ref{quotientlemma} below, these quotients satisfy an equation
which is independent of $A(z)$ and $B(z)$. See Figure 1 for the
plot of these quotients when $z$ runs through the roots of $H_{m}(z)$.
This lemma also appears in a different unpublished joint work with
Boyer and a proof is given for completeness.

\begin{lemma}\label{quotientlemma}

Let $z$ be a root of $H_{m}(z)$. If $q:=q_{1}=e^{2i\theta}$, $\theta\in\mathbb{R}$,
then besides the two roots $e^{\pm i\theta}$, the remaining $n-2$
roots of the polynomial
\begin{equation}
P_{\theta}(\zeta)=\zeta^{n}-\frac{\sin n\theta}{\sin\theta}\zeta+\frac{\sin(n-1)\theta}{\sin\theta}\label{eq:Pthetadef}
\end{equation}
are $e^{-i\theta}q_{2},\ldots,e^{-i\theta}q_{n-1}$.

\end{lemma}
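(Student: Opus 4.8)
The plan is to start from the factorization of the denominator in terms of its roots in $t$. Write $1+B(z)t+A(z)t^{n}=A(z)\prod_{k=0}^{n-1}(t-t_{k})$. Dividing through by $t^{n}$ and substituting $t=1/s$, or more directly working with the reciprocal polynomial, one sees that the $t_{k}$ satisfy $\sum_{k}t_{k}^{-1}$-type symmetric function identities: comparing coefficients gives $e_{1}(t_{0}^{-1},\ldots,t_{n-1}^{-1})\propto B(z)/A(z)$ via the coefficient of $t^{n-1}$, namely $\sum_{k} \prod_{j\ne k} t_{j} = -B(z)/A(z)$, while $\prod_{k}t_{k}=1/A(z)$ from the constant term, and every intermediate elementary symmetric function in the $t_{k}$ vanishes because the polynomial $1+B(z)t+A(z)t^n$ is a trinomial (only the $t^0$, $t^1$, $t^n$ coefficients are nonzero). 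Dividing the vanishing conditions $e_{n-1}(t)=\dots$ wait — precisely, $e_j(t_0,\dots,t_{n-1})=0$ for $2\le j\le n-1$, $e_1 = -B/A$, $e_n=1/A$. Now set $q_k=t_k/t_0$. Dividing $e_j(t)=0$ by $t_0^{j}$ for $2\le j\le n-1$ shows $e_j(1,q_1,\dots,q_{n-1})=0$ for those $j$, so the monic polynomial $\prod_{k=0}^{n-1}(\zeta-q_k)=(\zeta-1)\prod_{k=1}^{n-1}(\zeta-q_k)$ has the form $\zeta^{n}-e_{1}(q)\zeta^{n-1}+\dots$ with all middle coefficients zero except possibly $e_1$ and $e_{n-1}$; one checks it equals $\zeta^{n}-c_{1}\zeta^{n-1}+c_{2}\zeta-c_{3}$ for suitable $c_i$, hence after dividing by $(\zeta-1)$ we get a trinomial-type polynomial in $\zeta$ of degree $n-1$.

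The key input that has not yet been used is that $z$ is a \emph{root of $H_{m}(z)$}. The plan is to extract from $H_m(z)=0$ the statement that $t_0$ and $t_1$ have equal modulus, so that $q_1=t_1/t_0$ lies on the unit circle and can be written $e^{2i\theta}$; then $e^{-i\theta}$ times each $q_k$ symmetrizes the picture. Concretely, from the generating function, $H_m(z)$ is (up to a nonzero factor $A(z)^{\text{something}}$ and the Vandermonde-type denominator) the complete homogeneous symmetric expression $\sum_{k_0+\dots+k_{n-1}=m} t_0^{-k_0}\cdots t_{n-1}^{-k_{n-1}}$, or more usably, by partial fractions $H_m(z)=\sum_{k=0}^{n-1}\gamma_k t_k^{-m-1}$ where $\gamma_k=1/(A(z)\prod_{j\ne k}(t_k-t_j))$, valid when the $t_k$ are distinct. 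I will argue — as in the known $n=2,3,4$ cases — that for $z$ a root and $m$ large the dominant balance forces two of the reciprocals $t_k^{-1}$ to have the (largest) equal modulus; relabelling these as $t_0,t_1$ gives $|q_1|=1$. Writing $q_1=e^{2i\theta}$ and replacing $\zeta$ by $e^{i\theta}\zeta$ in the degree-$(n-1)$ trinomial above, I expect the coefficients to become exactly the real trigonometric ratios $\sin n\theta/\sin\theta$ and $\sin(n-1)\theta/\sin\theta$, i.e.\ the polynomial $P_\theta(\zeta)$ of \eqref{eq:Pthetadef} divided by $(\zeta-e^{i\theta})$ — and one verifies directly that $P_\theta(e^{i\theta})=P_\theta(e^{-i\theta})=0$ by the identity $\sin n\theta - (\sin n\theta/\sin\theta)\sin\theta\cdot(\text{stuff})$, more simply: $e^{\pm i\theta}$ are roots of $\zeta^n-\frac{\sin n\theta}{\sin\theta}\zeta+\frac{\sin(n-1)\theta}{\sin\theta}$ because $e^{in\theta}-\frac{\sin n\theta}{\sin\theta}e^{i\theta}+\frac{\sin(n-1)\theta}{\sin\theta}=\frac{1}{\sin\theta}\bigl(e^{in\theta}\sin\theta-e^{i\theta}\sin n\theta+\sin(n-1)\theta\bigr)=0$ after expanding the sines as $(e^{i\cdot}-e^{-i\cdot})/2i$. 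Dividing $P_\theta(\zeta)$ by $(\zeta-e^{i\theta})(\zeta-e^{-i\theta})=\zeta^2-2\cos\theta\,\zeta+1$ leaves a monic degree-$(n-2)$ polynomial whose roots are precisely the remaining $e^{-i\theta}q_k$.

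So the steps, in order, are: (1) factor the denominator and read off that all elementary symmetric functions $e_j(t_0,\dots,t_{n-1})$ vanish for $2\le j\le n-1$; (2) pass to the quotients $q_k=t_k/t_0$ and deduce that $\prod_{k\ne 0}(\zeta-q_k)$ is, after multiplying by $(\zeta-1)$, a four-term-at-most polynomial, hence of the claimed shape; (3) use $H_m(z)=0$ together with the partial-fraction / dominant-root analysis to conclude $|q_1|=1$, writing $q_1=e^{2i\theta}$; (4) perform the substitution $\zeta\mapsto e^{i\theta}\zeta$ and match coefficients to identify the resulting polynomial with $P_\theta(\zeta)/\bigl((\zeta-e^{i\theta})(\zeta-e^{-i\theta})\bigr)$, using the elementary sine identities; (5) conclude that the $n-2$ remaining roots of $P_\theta$ are $e^{-i\theta}q_2,\dots,e^{-i\theta}q_{n-1}$.

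I expect step (3) to be the main obstacle: showing that being a root of $H_m(z)$ forces \emph{exactly} a modulus tie among the reciprocal roots $t_k^{-1}$ (rather than merely using it to derive some algebraic relation) is the substantive analytic point, and it is presumably where the hypothesis "$z$ is a root" and, ultimately, the "large $m$" threshold $C(n)$ enter. The remaining steps (1), (2), (4), (5) are symmetric-function bookkeeping and trigonometric verification, routine once the setup is fixed. A minor subtlety throughout is the genericity assumption that the $t_k$ are distinct and $t_0\ne 0$ (equivalently $A(z)\ne 0$), which must be handled or excluded; since the lemma already restricts attention implicitly to such $z$, this should cost nothing.
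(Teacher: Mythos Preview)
You have misread the logical structure of the lemma. The clause ``$q:=q_{1}=e^{2i\theta}$'' is a \emph{hypothesis}, not a conclusion to be extracted from $H_{m}(z)=0$. In the paper's proof the assumption that $z$ is a root of $H_{m}(z)$ is never used; that opening sentence is purely contextual (the thread is picked up later in Lemma~\ref{HthetaLemma}). Consequently your step~(3) --- which you flag as ``the main obstacle'' --- is not part of this lemma at all. There is no dominant-balance or large-$m$ argument here: the statement is a purely algebraic fact about the roots $t_{k}$ of the trinomial $1+Bt+At^{n}$, valid for any $z$ for which one quotient $t_{1}/t_{0}$ happens to lie on the unit circle.

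A second, smaller slip: your indexing of the vanishing symmetric functions is off. Since only the $t^{0}$, $t^{1}$, $t^{n}$ coefficients of $1+Bt+At^{n}$ are nonzero, it is $e_{j}(t_{0},\ldots,t_{n-1})$ for $1\le j\le n-2$ that vanish (the coefficients of $t^{n-1},\ldots,t^{2}$), while $e_{n-1}$ carries $B$ and $e_{n}$ the constant term --- not $e_{1}=-B/A$ as you wrote. This matters for the bookkeeping in your step~(2).

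Once step~(3) is deleted and the indices are corrected, your outline (1), (2), (4), (5) is essentially the paper's route. The paper separates out $q_{0}=1$ and $q_{1}=q$ from the start and works with $e_{k}:=e_{k}(q_{2},\ldots,q_{n-1})$; dividing $e_{k}(t_{0},\ldots,t_{n-1})=0$ by $t_{0}^{k}$ for $1\le k\le n-2$ gives a triangular linear system whose solution is $e_{k}=(-1)^{k}(1+q+\cdots+q^{k})$. Assembling the degree-$(n-2)$ polynomial with these coefficients, multiplying by $1-q$, and adding $\zeta^{n-1}$ to both sides yields the clean identity
\[
\frac{\zeta^{n}-1}{\zeta-1}=\frac{\zeta^{n}-q^{n}}{\zeta-q},
\]
from which the substitution $\zeta\mapsto e^{i\theta}\zeta$ together with $\dfrac{q^{n}-1}{q-1}=e^{(n-1)i\theta}\dfrac{\sin n\theta}{\sin\theta}$ gives $P_{\theta}$ directly. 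Your proposed path --- keep all $q_{k}$ in play, form $\prod_{k}(\zeta-q_{k})$, then divide by $(\zeta-1)(\zeta-q)$ --- would also work, but the paper's shortcut of isolating $q_{2},\ldots,q_{n-1}$ avoids that division.
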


\begin{proof}

Let $e_{k}(t_{0},t_{1},\ldots,t_{n-1})$ be the $k$-th elementary
symmetric polynomials in the variables $t_{0},t_{1},\ldots,t_{n-1}$.
Since $t_{0},t_{1},\ldots,t_{n-1}$ are the roots of $1+B(z)t+A(z)t^{n}$,
we have $e_{k}(t_{0},t_{1},\ldots,t_{n-1})=0$ when $1\le k\le n-2$.
Let $e_{k}:=e_{k}(q_{2},\ldots,q_{n-1})$. We divide the equation
$e_{k}(t_{0},t_{1},\ldots,t_{n-1})=0$ by $t_{0}^{k}$, $1\le k\le n-2$,
to obtain
\[
\begin{cases}
e_{1}+q+1 & =0\\
e_{2}+e_{1}(q+1)+q & =0\\
e_{3}+e_{2}(q+1)+e_{1}q & =0\\
e_{4}+e_{3}(q+1)+e_{2}q & =0\\
\vdots & \vdots\\
e_{n-2}+e_{n-3}(q+1)+e_{n-4}q & =0.
\end{cases}
\]
We solve this system of equations for $e_{1},\ldots,e_{n-2}$ by substitution
to get 
\[
e_{k}=(-1)^{k}(1+q+q^{2}+\cdots+q^{k})
\]
with $1\le k\le n-2$. Assuming that $e_{0}=1$, the definition of
$e_{k}$ implies that $q_{2},q_{3},\ldots,q_{n-1}$ are the roots
of the equation 
\[
\sum_{k=0}^{n-2}(-1)^{k}e_{k}\zeta^{n-k-2}=0.
\]
We multiply both side of this equation by $(1-q)$ to obtain 
\[
(1-q)\zeta^{n-2}+(1-q^{2})\zeta^{n-3}+(1-q^{3})\zeta^{n-4}+\cdots+(1-q^{n-1})=0.
\]
Thus 
\[
\zeta^{n-2}+\zeta^{n-3}+\cdots+1=q\zeta^{n-2}+q^{2}\zeta^{n-3}+\cdots+q^{n-1}.
\]
We add $\zeta^{n-1}$ to both sides to get 
\begin{equation}
\frac{\zeta^{n}-1}{\zeta-1}=\frac{\zeta^{n}-q^{n}}{\zeta-q}.\label{eq:zetaquotient}
\end{equation}
This identity implies that 
\[
\zeta^{n}(1-q)-(1-q^{n})\zeta+q(1-q^{n-1})=0.
\]
We divide both sides of this equation by $1-q$ and note that
\begin{eqnarray}
\frac{q^{n}-1}{q-1} & = & e^{(n-1)i\theta}\frac{e^{ni\theta}-e^{-ni\theta}}{e^{i\theta}-e^{-i\theta}}\nonumber \\
 & = & e^{(n-1)i\theta}\frac{\sin n\theta}{\sin\theta}.\label{eq:ChebyshevU}
\end{eqnarray}
We obtain 
\[
\zeta^{n}-e^{(n-1)i\theta}\frac{\sin n\theta}{\sin\theta}\zeta+e^{ni\theta}\frac{\sin(n-1)\theta}{\sin\theta}=0.
\]
The lemma follows from a substitution of $\zeta$ by $e^{i\theta}\zeta$. 

\end{proof}

The quotients of the roots of a polynomial connect with the $q$-analogue
of its discriminant. The $q$-discriminant, introduced by Mourad Ismail\cite{ismail},
of a general polynomial $P(x)$ of degree $d_{P}$ and the leading
coefficient $a_{P}$ is 
\[
\mathrm{Disc}_{x}(P;q)=a_{P}^{2n-2}q^{n(n-1)/2}\prod_{1\le i<j\le d_{P}}(q^{-1/2}x_{i}-q^{1/2}x_{j})(q^{1/2}x_{i}-q^{-1/2}x_{j}).
\]
where $x_{i}$, $1\le i\le d_{P}$, are the roots of $P(x)$. This
$q$-discriminant is zero if and only if a quotient of roots $x_{i}/x_{j}$
equals $q$. When $q$ approaches $1$, this $q$-discriminant becomes
the ordinary discriminant $\mathrm{Disc}_{x}P(x)$. This ordinary
discriminant is also the resultant of $P(x)$ and its derivative.
The resultant of any two polynomials $P(x)$ and $R(x)$ is given
by 
\begin{eqnarray}
\mathrm{Res}_{x}(P(x),R(x)) & = & a_{P}^{d_{R}}\prod_{P(x_{i})=0}R(x_{i})\nonumber \\
 & = & a_{R}^{d_{P}}\prod_{R(x_{i})=0}P(x_{i})\label{eq:resdef}
\end{eqnarray}
where $a_{R}$ and $d_{R}$ are the leading coefficient and the degree
of $R(x)$ respectively. For further information on the ordinary discriminant
and resultant, see \cite{aar,apostal,dilcherstolarsky,gkz,gisheismail}.

Ismail \cite{ismail} showed that

\begin{proposition}\label{qdiscform}

The $q$-discriminant of a polynomial $P(x)$ of degree $n$ with
a lead coefficient $\gamma$ is 
\[
\mathrm{Disc}_{x}(P;q)=(-1)^{n(n-1)/2}\gamma^{n-2}\prod_{i=1}^{n}(D_{q}P)(x_{i})
\]
where 
\[
(D_{q}P)(x)=\frac{P(x)-P(qx)}{x-qx}.
\]

\end{proposition}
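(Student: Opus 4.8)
The plan is to compute the product $\prod_{i=1}^{n}(D_{q}P)(x_{i})$ directly from the definition of $D_q$ and match it against the product form of the $q$-discriminant. First I would write $P(x) = \gamma\prod_{j=1}^{n}(x - x_j)$, so that
\[
(D_{q}P)(x) = \frac{P(x) - P(qx)}{(1-q)x} = \frac{\gamma}{(1-q)x}\left(\prod_{j=1}^{n}(x-x_j) - q^{n}\prod_{j=1}^{n}(x - q^{-1}x_j)\cdot\frac{1}{q^n}\cdot q^n\right),
\]
or more cleanly, evaluate at a root $x = x_i$: since $P(x_i) = 0$, we get $(D_{q}P)(x_i) = -P(qx_i)/((1-q)x_i)$. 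Then
\[
P(qx_i) = \gamma\prod_{j=1}^{n}(qx_i - x_j),
\]
so the factor $(qx_i - x_i) = (q-1)x_i$ (the $j=i$ term) cancels the denominator $(1-q)x_i$ up to sign, leaving $(D_qP)(x_i) = \gamma\prod_{j\ne i}(qx_i - x_j)$.

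Next I would take the product over $i$. This gives
\[
\prod_{i=1}^{n}(D_qP)(x_i) = \gamma^{n}\prod_{i=1}^{n}\prod_{j\ne i}(qx_i - x_j) = \gamma^{n}\prod_{i<j}(qx_i - x_j)(qx_j - x_i).
\]
The factor $(qx_j - x_i)$ should be rewritten as $-(x_i - qx_j)$, picking up a sign $(-1)^{\binom{n}{2}} = (-1)^{n(n-1)/2}$ over all pairs. Comparing with the definition
\[
\mathrm{Disc}_{x}(P;q) = \gamma^{2n-2}q^{n(n-1)/2}\prod_{i<j}(q^{-1/2}x_i - q^{1/2}x_j)(q^{1/2}x_i - q^{-1/2}x_j),
\]
I would pull a factor $q^{-1/2}$ out of each $(q^{-1/2}x_i - q^{1/2}x_j) = q^{-1/2}(x_i - qx_j)$ and a factor $q^{1/2}$ out of each $(q^{1/2}x_i - q^{-1/2}x_j) = q^{1/2}(qx_i - x_j)$, wait—more carefully, $q^{1/2}x_i - q^{-1/2}x_j = q^{-1/2}(qx_i - x_j)$. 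So each pair contributes $q^{-1/2}\cdot q^{-1/2} = q^{-1}$, and over $\binom{n}{2}$ pairs this is $q^{-n(n-1)/2}$, which exactly cancels the prefactor $q^{n(n-1)/2}$. This leaves $\mathrm{Disc}_x(P;q) = \gamma^{2n-2}\prod_{i<j}(x_i - qx_j)(qx_i - x_j)$, and substituting the expression above for the product of $(D_qP)(x_i)$ yields the claimed identity after accounting for the factors of $\gamma$: we have $\gamma^{2n-2} = \gamma^{n-2}\cdot\gamma^{n}$, which matches perfectly.

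The only genuine obstacle here is bookkeeping: keeping the signs, the powers of $q$, and the powers of $\gamma$ consistent between the two different normalizations of the discriminant. There is no conceptual difficulty — it is a routine manipulation of the product formula — so the proof is short once the algebra is arranged carefully. I would present it by first establishing the single-root evaluation $(D_qP)(x_i) = \gamma\prod_{j\ne i}(qx_i - x_j)$, then multiplying and matching coefficients, citing \cite{ismail} for the original argument.
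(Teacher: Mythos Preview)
Your argument is correct. The single-root evaluation $(D_qP)(x_i)=\gamma\prod_{j\ne i}(qx_i-x_j)$ is right, the pairing of the double product over ordered $(i,j)$ into unordered pairs is right, and the extraction of $q^{-1/2}$ from each linear factor in the definition of $\mathrm{Disc}_x(P;q)$ does cancel the prefactor $q^{n(n-1)/2}$ exactly as you say; the sign $(-1)^{n(n-1)/2}$ and the power $\gamma^{n-2}=\gamma^{2n-2}/\gamma^{n}$ then fall out.

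Note, however, that the paper does not actually prove this proposition: it is quoted as a result of Ismail \cite{ismail} and used as a black box in the proof of Lemma~\ref{qdiscdenom}. So there is no proof in the paper to compare against; your write-up supplies an argument where the paper gives none. If you include it, clean up the self-correction in the middle (the ``wait---more carefully'' aside) and simply state the two factorizations $q^{-1/2}x_i-q^{1/2}x_j=q^{-1/2}(x_i-qx_j)$ and $q^{1/2}x_i-q^{-1/2}x_j=q^{-1/2}(qx_i-x_j)$ directly.
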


The lemma below also appears in an unpublished joint work with Boyer.
A proof is given for completeness.

\begin{lemma}\label{qdiscdenom}

The $q$-discriminant of $D(t)=1+Bt+At^{n}$ is 
\[
\mathrm{Disc}_{t}(D(t);q)=\pm A^{n-2}\left(B^{n}\frac{q^{n-1}(1-q^{n-1})^{n-1}}{(1-q)^{n-1}}+(-1)^{n-1}\frac{(1-q^{n})^{n}}{(1-q)^{n}}A\right).
\]

\end{lemma}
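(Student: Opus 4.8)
The plan is to compute the $q$-discriminant of $D(t) = 1 + Bt + At^n$ directly from Proposition~\ref{qdiscform}, using the $q$-derivative $D_q D$. First I would compute $(D_q D)(t) = \frac{D(t) - D(qt)}{t - qt}$. Since $D(t) - D(qt) = B(t - qt) + A(t^n - q^n t^n) = B(1-q)t + A(1-q^n)t^n$, dividing by $(1-q)t$ gives the simple linear-in-$t^{n-1}$ expression $(D_q D)(t) = B + A\,\frac{1-q^n}{1-q}\,t^{n-1}$. By Proposition~\ref{qdiscform}, with $\gamma = A$ being the lead coefficient, we have $\mathrm{Disc}_t(D;q) = (-1)^{n(n-1)/2} A^{n-2} \prod_{i=1}^{n} (D_q D)(t_i)$, where $t_1, \ldots, t_n$ are the roots of $D(t)$.

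The key step is then to evaluate $\prod_{i=1}^n (D_q D)(t_i) = \prod_{i=1}^n \bigl( B + A\,\frac{1-q^n}{1-q}\, t_i^{n-1} \bigr)$. I would recognize this as (up to a constant) a resultant: writing $c = A\,\frac{1-q^n}{1-q}$, the product equals $\prod_i \bigl(B + c\, t_i^{n-1}\bigr)$, which by the first form of the resultant formula~(\ref{eq:resdef}) is $A^{-1}\,\mathrm{Res}_t\bigl(D(t),\, B + c\,t^{n-1}\bigr)$ (accounting for the lead coefficient $A$ of $D$ and the degree $n-1$ of the second polynomial). To evaluate this resultant I would instead use the second form of~(\ref{eq:resdef}): $\mathrm{Res}_t(D(t), B + c\,t^{n-1})$ equals $c^{\,n}$ times $\prod_{s} D(s)$ over the roots $s$ of $B + c\,t^{n-1}$, i.e.\ over the $(n-1)$-st roots $s$ of $-B/c$. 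On each such root, $s^{n-1} = -B/c$, so $A s^n = A s \cdot s^{n-1} = -ABs/c$, and hence $D(s) = 1 + Bs - ABs/c = 1 + Bs(1 - A/c) = 1 + Bs\cdot\frac{c-A}{c}$. Taking the product over the $n-1$ values of $s$ (whose product is $(-1)^{n-1}(-B/c) \cdot (\text{sign adjustments})$ and whose elementary symmetric functions all vanish except the top one) collapses $\prod_s \bigl(1 + Bs\,\frac{c-A}{c}\bigr)$ to a two-term expression of the form $1 - \bigl(\frac{B(c-A)}{c}\bigr)^{n-1}\cdot\frac{-B}{c}$, since $\prod_s(1 + \beta s) = 1 - (-\beta)^{n-1}\prod_s(-s)$ when only the constant and top symmetric functions of the $s_i$ survive.

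Assembling these pieces, substituting $c = A(1-q^n)/(1-q)$ back in, and simplifying the powers of $(1-q)$, $(1-q^n)$, $A$, and $B$, should yield exactly
\[
\mathrm{Disc}_t(D(t);q) = \pm A^{n-2}\left( B^n \frac{q^{n-1}(1-q^{n-1})^{n-1}}{(1-q)^{n-1}} + (-1)^{n-1}\frac{(1-q^n)^n}{(1-q)^n} A \right),
\]
where the overall $\pm$ absorbs the sign factors $(-1)^{n(n-1)/2}$ and the various $(-1)^{n-1}$ from the symmetric-function bookkeeping; since the statement only claims the identity up to sign, I need not track these carefully. The one place to be careful — the main obstacle — is the bookkeeping in the collapse $\prod_s \bigl(1 + Bs\,\frac{c-A}{c}\bigr)$: one must verify that $c - A = A\bigl(\frac{1-q^n}{1-q} - 1\bigr) = A\,\frac{q - q^n}{1-q} = Aq\,\frac{1-q^{n-1}}{1-q}$, so that the ratio $\frac{c-A}{c} = \frac{q(1-q^{n-1})}{1-q^n}$, and then track how the factor $\frac{-B}{c} = \frac{-B(1-q)}{A(1-q^n)}$ enters, so that the power $q^{n-1}(1-q^{n-1})^{n-1}/(1-q)^{n-1}$ emerges cleanly on the $B^n$ term and $(1-q^n)^n/(1-q)^n$ on the $A$ term. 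This is a routine but error-prone simplification; once it is done the lemma follows immediately.
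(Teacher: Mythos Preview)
Your approach is correct and essentially matches the paper's: both start from Proposition~\ref{qdiscform}, compute $(D_qD)(t)=B+A\frac{1-q^n}{1-q}t^{n-1}$, and then use the symmetry of the resultant~(\ref{eq:resdef}) to trade the product over the roots of $D$ for a product over the $n-1$ roots $s$ of $(D_qD)$. The only divergence is in the final algebra: the paper notes the linear relation $\frac{1-q^n}{1-q}D(t)=t(D_qD)(t)+tB\bigl(\frac{1-q^n}{1-q}-1\bigr)+\frac{1-q^n}{1-q}$ and then swaps the resultant once more to evaluate $(D_qD)$ at a single explicit point, whereas you compute $\prod_s\bigl(1+\beta s\bigr)$ directly using that all intermediate elementary symmetric functions of the $(n-1)$-st roots of $-B/c$ vanish; both routes collapse to the same two-term expression. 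One bookkeeping slip to fix when you write it up: formula~(\ref{eq:resdef}) gives $\prod_i(D_qD)(t_i)=A^{-(n-1)}\mathrm{Res}_t(D,D_qD)$, not $A^{-1}$, so the power of $A$ you carry through the swap needs to be $n-1$ (this is exactly why the $A^{n-2}$ prefactor and the single extra $A$ inside the bracket come out right).
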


\begin{proof}

Proposition \ref{qdiscform} gives 

\begin{eqnarray*}
\mathrm{Disc}_{t}(D(t);q) & = & \pm A^{n-2}\prod_{D(t)=0}(D_{q}D)(t)
\end{eqnarray*}
where 
\begin{eqnarray}
(D_{q}D)(t) & = & \frac{D(t)-D(qt)}{t-qt}\nonumber \\
 & = & B+At^{n-1}\frac{1-q^{n}}{1-q}.\label{eq:qdiffdenom}
\end{eqnarray}
Using the symmetric definition of resultant in (\ref{eq:resdef}),
we write the product in $\mathrm{Disc}_{t}(D(t);q)$ as 
\begin{equation}
\mathrm{Disc}_{t}(D(t);q)=\pm A^{n-1}\left(\frac{1-q^{n}}{1-q}\right)^{n}\prod_{(D_{q}D)(t)=0}D(t).\label{eq:qdiscprod}
\end{equation}
From the definition of $D(t)$ and the formula of $(D_{q}D)(t)$ in
(\ref{eq:qdiffdenom}), these two polynomials are connected by 
\[
\frac{1-q^{n}}{1-q}D(t)=t(D_{q}D)(t)+tB\left(\frac{1-q^{n}}{1-q}-1\right)+\frac{1-q^{n}}{1-q}.
\]
Thus (\ref{eq:qdiscprod}) gives
\begin{eqnarray*}
\mathrm{Disc}_{t}(D(t);q) & = & \pm A^{n-1}\left(\frac{1-q^{n}}{1-q}\right)\prod_{(D_{q}D)(t)=0}\left(tB\left(\frac{1-q^{n}}{1-q}-1\right)+\frac{1-q^{n}}{1-q}\right)\\
 & = & \pm A^{n-2}B^{n}\frac{q^{n}(1-q^{n-1})^{n}}{(1-q)^{n}}(D_{q}D)\left(-\frac{1-q^{n}}{Bq(1-q^{n-1})}\right)\\
 & = & \pm A^{n-2}\left(B^{n}\frac{q^{n-1}(1-q^{n-1})^{n-1}}{(1-q)^{n-1}}+(-1)^{n-1}\frac{(1-q^{n})^{n}}{(1-q)^{n}}A\right).
\end{eqnarray*}

\end{proof}

From Lemma \ref{qdiscform}, we see that in the case $q=1$ all the
points $z$ such that $\mathrm{Disc}_{t}D(t)=0$ belong to the curve
$\mathcal{C}$. Thus we only need to consider the case $\mathrm{Disc}_{t}D(t)\ne0$,
i.e., all the roots $t_{0},t_{1},\ldots,t_{n-1}$ are distinct. The
polynomial $P_{\theta}(\zeta)$ in Lemma \ref{quotientlemma} plays
an important role in the proof of Theorem \ref{maintheorem}. In particular,
instead of counting the number of roots of $H_{m}(z)$ on $\mathcal{C}$,
we will count the number of real roots of a function of $\theta$
where $q_{1}=e^{2i\theta}$. This function is given by the lemma below.

\begin{lemma}\label{HthetaLemma}

Let $z$ be a point on $\mathcal{C}$ such that $q_{1}=e^{2i\theta}$,
$\theta\in\mathbb{R}$ and $A(z)\ne0$. Then $z$ is a root of $H_{m}(z)$
if and only if 
\[
h(\theta):=\sum_{k=0}^{n-1}\frac{1}{\zeta_{k}^{m+1}P'_{\theta}(\zeta_{k})}=0
\]
 where $\zeta_{0},\ldots,\zeta_{n-1}$ are the roots of
\[
P_{\theta}(\zeta)=\zeta^{n}-\frac{\sin n\theta}{\sin\theta}\zeta+\frac{\sin(n-1)\theta}{\sin\theta}.
\]

\end{lemma}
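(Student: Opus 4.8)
The plan is to derive an exact formula for $H_{m}(z)$ from a partial fraction expansion of the generating function in $t$ and then to recognize the result as a nonzero scalar multiple of $h(\theta)$. As noted just before the statement, we may restrict to $z$ with $\mathrm{Disc}_{t}D(t)\ne0$, so the roots $t_{0},\ldots,t_{n-1}$ of $D(t)=1+B(z)t+A(z)t^{n}$ are distinct; they are also nonzero since $D(0)=1$. Writing $D(t)=A(z)\prod_{k=0}^{n-1}(t-t_{k})$, decomposing $1/D(t)$ into partial fractions, and expanding each $1/(t-t_{k})$ as a geometric series about $t=0$, I would read off the coefficient of $t^{m}$:
\[
H_{m}(z)=-\sum_{k=0}^{n-1}\frac{1}{D'(t_{k})\,t_{k}^{m+1}}.
\]
(As a check, at $m=0$ the right side is $-\sum_{k}1/(D'(t_{k})t_{k})=1/D(0)=1$, in agreement with (\ref{eq:initialcond}).)

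The next step is to rewrite this sum in the variables of Lemma \ref{quotientlemma}. With $q_{k}=t_{k}/t_{0}$, so that $q_{0}=1$ and $q_{1}=e^{2i\theta}$, set $\zeta_{k}:=e^{-i\theta}q_{k}=e^{-i\theta}t_{k}/t_{0}$; then $\zeta_{0}=e^{-i\theta}$, $\zeta_{1}=e^{i\theta}$, and by Lemma \ref{quotientlemma} the list $\zeta_{0},\ldots,\zeta_{n-1}$ is exactly the set of roots of the monic polynomial $P_{\theta}(\zeta)$. Since $h(\theta)$ is a symmetric function of these roots, this particular labeling is harmless. Now $t_{k}=t_{0}e^{i\theta}\zeta_{k}$ gives $t_{k}^{m+1}=(t_{0}e^{i\theta})^{m+1}\zeta_{k}^{m+1}$, while differentiating $D(t)=A(z)\prod_{j}(t-t_{j})$ at $t=t_{k}$ and using $t_{k}-t_{j}=t_{0}e^{i\theta}(\zeta_{k}-\zeta_{j})$ gives $D'(t_{k})=A(z)\prod_{j\ne k}(t_{k}-t_{j})=A(z)(t_{0}e^{i\theta})^{n-1}P'_{\theta}(\zeta_{k})$. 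Substituting these into the formula above, the factor $A(z)(t_{0}e^{i\theta})^{m+n}$ comes out of the sum uniformly and leaves
\[
H_{m}(z)=-\frac{1}{A(z)\,(t_{0}e^{i\theta})^{m+n}}\sum_{k=0}^{n-1}\frac{1}{\zeta_{k}^{m+1}P'_{\theta}(\zeta_{k})}=-\frac{h(\theta)}{A(z)\,(t_{0}e^{i\theta})^{m+n}}.
\]
Because $A(z)\ne0$ and $t_{0}\ne0$, this prefactor is nonzero, so $H_{m}(z)=0$ if and only if $h(\theta)=0$.

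The lemma is mostly bookkeeping once Lemma \ref{quotientlemma} is in hand; the one point that needs care is the reduction to the case of distinct roots $t_{k}$, which is what makes the partial fraction expansion legitimate and ensures $P'_{\theta}(\zeta_{k})\ne0$ so that $h(\theta)$ is well defined. I do not expect a genuine obstacle beyond correctly tracking the powers of $t_{0}e^{i\theta}$ and keeping the indexing of $P_\theta$'s roots consistent with that of $t_0,\ldots,t_{n-1}$.
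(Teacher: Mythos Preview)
Your proof is correct and follows essentially the same route as the paper: partial fractions for $1/D(t)$, extraction of the $t^{m}$-coefficient, and the substitution $t_{k}=t_{0}e^{i\theta}\zeta_{k}$ to pass from the $t_{k}$ to the roots of $P_{\theta}$. The only cosmetic differences are that you phrase the residues via $D'(t_{k})$ and keep the nonzero prefactor $-1/(A(z)(t_{0}e^{i\theta})^{m+n})$ explicit rather than dividing it out in two steps.
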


\begin{proof}

By partial fractions, we have 
\begin{eqnarray*}
\sum_{m=0}^{\infty}H_{m}(z)t^{m} & = & \frac{1}{A(z)(t-t_{0})(t-t_{2})\cdots(t-t_{n-1})}\\
 & = & \frac{1}{A(z)}\sum_{k=0}^{n-1}\frac{1}{t-t_{k}}\prod_{l\ne k}\frac{1}{t_{k}-t_{l}}\\
 & = & \frac{1}{A(z)}\sum_{k=0}^{n-1}\frac{1}{t_{k}^{m+1}}\prod_{l\ne k}\frac{1}{t_{k}-t_{l}}t^{m}.
\end{eqnarray*}
Thus $H_{m}(z)=0$ is equivalent to 
\[
\sum_{k=0}^{n-1}\frac{1}{t_{k}^{m+1}}\prod_{l\ne k}\frac{1}{t_{k}-t_{l}}=0.
\]
We multiply the equation by $t_{0}^{m+n}$ and let $q_{k}=t_{k}/t_{0}$
to obtain 
\begin{equation}
\sum_{k=0}^{n-1}\frac{1}{q_{k}^{m+1}}\prod_{l\ne k}\frac{1}{q_{k}-q_{l}}=0.\label{eq:Hquotients}
\end{equation}
Let $\zeta_{k}=e^{-i\theta}q_{k}$, $0\le k<n$. Lemma \ref{quotientlemma}
implies that $\zeta_{0},\ldots,\zeta_{n-1}$ are the roots of
\[
P_{\theta}(\zeta)=\zeta^{n}-\frac{\sin n\theta}{\sin\theta}\zeta+\frac{\sin(n-1)\theta}{\sin\theta}
\]
where $\zeta_{0}=e^{-i\theta}$ and $\zeta_{1}=e^{i\theta}$. We multiply
(\ref{eq:Hquotients}) by $e^{(m+n)i\theta}$ and obtain
\[
\sum_{k=0}^{n-1}\frac{1}{\zeta_{k}^{m+1}P'_{\theta}(\zeta_{k})}=0.
\]

\end{proof}

Since $P_{\theta}(\zeta)$ is a real polynomial in $\zeta$, the function
$h(\theta)$ is a real-valued function of $\theta$ by the symmetric
reduction. Each real root $\theta$ of $h(\theta)$ yields some roots
$z$ of $H_{m}(z)$ on $\mathcal{C}.$ For example, if $\theta=\pi/n$
is a root of $h(\theta)$ then a root of $B(z)$ will also be a root
of $H_{m}(z)$ on $\mathcal{C}$ since, from Lemma \ref{qdiscdenom},
\begin{equation}
(-1)^{n}\frac{B^{n}(z)}{A(z)}=\frac{(1-q^{n})^{n}}{(1-q)q^{n-1}(1-q^{n-1})^{n-1}}\label{eq:zqtheta}
\end{equation}
where $q=e^{2i\theta}$. The lemma below gives the number of roots
of $H_{m}(z)$ in the case of $\theta=\pi/n$. 

\begin{lemma}\label{zerosB}

If $m=np+r$ with $0\le r<n$ then the polynomial $B^{r}(z)$ divides
$H_{m}(z)$. 

\end{lemma}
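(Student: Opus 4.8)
The plan is to prove a statement slightly cleaner than the one as written: for every $m\ge 0$, $B^{r(m)}(z)$ divides $H_{m}(z)$, where $r(m)$ denotes the unique element of $\{0,1,\dots ,n-1\}$ congruent to $m$ modulo $n$. Taking $m=np+r$ recovers the lemma. The reason for phrasing it in terms of the residue is that the recurrence (\ref{eq:recurrence}) only ever lowers the index by $1$ or by $n$, and these two operations change $r(m)$ in mutually compatible ways, so an induction on $m$ will close cleanly.

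First I would dispatch the base cases $0\le m<n$: here the initial conditions (\ref{eq:initialcond}) give $H_{m}(z)=(-1)^{m}B^{m}(z)$ and $r(m)=m$, so the divisibility holds (with equality, in fact). For the inductive step, fix $m\ge n$ and assume the claim for all smaller indices. If $r(m)=0$ there is nothing to prove, so assume $r(m)\ge 1$; then $r(m-1)=r(m)-1$ and $r(m-n)=r(m)$. The inductive hypothesis gives $B^{r(m)-1}(z)\mid H_{m-1}(z)$ and $B^{r(m)}(z)\mid H_{m-n}(z)$, so $B^{r(m)}(z)$ divides both $B(z)H_{m-1}(z)$ and $A(z)H_{m-n}(z)$, hence it divides $H_{m}(z)=-B(z)H_{m-1}(z)-A(z)H_{m-n}(z)$ by (\ref{eq:recurrence}). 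This finishes the induction.

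There is no genuine obstacle here; the only thing to keep straight in the inductive step is the index bookkeeping, namely that the factor $B(z)$ sitting in front of $H_{m-1}(z)$ in the recurrence exactly compensates for the one lost power of $B$ caused by dropping the index from $m$ to $m-1$, while the term $A(z)H_{m-n}(z)$ already carries the full power $B^{r(m)}(z)$. As an alternative one can avoid induction entirely by expanding the generating function: from $1+B t+A t^{n}=(1+A t^{n})\bigl(1+B t/(1+A t^{n})\bigr)$ and the geometric series,
\[
\sum_{m\ge 0}H_{m}(z)t^{m}=\sum_{j\ge 0}(-1)^{j}B^{j}(z)\,\frac{t^{j}}{(1+A(z)t^{n})^{j+1}},
\]
and since $(1+A(z)t^{n})^{-j-1}$ is a power series in $t^{n}$, only the indices $j\equiv m\pmod n$ contribute to the coefficient of $t^{m}$; the smallest such $j$ is $r$, so $B^{r}(z)$ divides $H_{m}(z)$ term by term. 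Expanding the binomial series in each summand even gives the closed form $H_{m}(z)=\sum_{\ell=0}^{p}(-1)^{r+n\ell}\binom{r+p+(n-1)\ell}{p-\ell}(-A(z))^{p-\ell}B^{r+n\ell}(z)$, from which the divisibility by $B^{r}(z)$ is manifest.
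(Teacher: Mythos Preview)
Your induction argument is correct and complete; the bookkeeping on residues is right, and the recurrence closes exactly as you say. The alternative generating-function argument and the explicit closed form are also correct.

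The paper's proof, however, does not proceed by induction. It expands the generating function directly as a geometric series,
\[
\frac{1}{1+B(z)t+A(z)t^{n}}=\sum_{k\ge 0}(-1)^{k}\bigl(B(z)t+A(z)t^{n}\bigr)^{k}=\sum_{k\ge 0}\sum_{i=0}^{k}(-1)^{k}\binom{k}{i}A^{k-i}(z)B^{i}(z)\,t^{\,n(k-i)+i},
\]
and observes that the exponent $n(k-i)+i$ is congruent to $i$ modulo $n$, so any term contributing to $t^{m}$ has $i\equiv r\pmod n$ and hence $i\ge r$. That is essentially a one-line proof. Your second approach is in the same spirit but factors the denominator as $(1+At^{n})\bigl(1+Bt/(1+At^{n})\bigr)$ before expanding, which is a slightly different organization leading to the same conclusion and, as a bonus, the explicit formula for $H_{m}$. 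Your primary inductive route is a genuinely different mechanism: it trades the combinatorial observation about exponents for a direct use of the recurrence, making transparent exactly why the extra factor of $B(z)$ in the recurrence is what restores the lost power when the index drops by one. Both arguments are equally elementary; the generating-function route is shorter, while your induction perhaps better explains \emph{why} the divisibility propagates.
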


\begin{proof}

The generating function of $H_{m}(z)$ is 
\begin{eqnarray*}
\frac{1}{1+B(z)t+A(z)t^{n}} & = & \sum_{k=0}^{\infty}(-1)^{k}\left(B(z)t+A(z)t^{n}\right)^{k}\\
 & = & \sum_{k=0}^{\infty}\sum_{i=0}^{k}(-1)^{k}\binom{k}{i}t^{n(k-i)+i}A^{k-i}(z)B^{i}(z).
\end{eqnarray*}
The lemma follows from the power of $t$ in the double summation above.

\end{proof}

Our main goal is to show that the number of real roots $\theta$ of
$h(\theta)$ on the interval $[0,\pi/n)$ is $p$ when $m$ is large.
See Figure 1 for a plot of these $p$ roots on the arc $q=e^{2i\theta}$.
Assuming this fact, which will be proved later, we provide an argument
which shows that the roots of $H_{m}(z)$ lie on $\mathcal{C}$. From
Lemma \ref{qdiscdenom}, if $q$ is a quotient of two roots in $t$
of $D(t,z)$ then
\begin{equation}
(-1)^{n}B^{n}(z)(1-q)q^{n-1}(1-q^{n-1})^{n-1}=(1-q^{n})^{n}A(z).\label{eq:zqconnection}
\end{equation}
If $\theta$ is a root of $h(\theta)$ on the interval $[0,\pi/n)$
then, with $q=e^{2i\theta}$, each solution in $z$ of (\ref{eq:zqconnection})
is a root of $H_{m}(z)$. From (\ref{eq:ChebyshevU}), we write the
right side of (\ref{eq:zqtheta}) in $\theta$ and see that it satisfies
\begin{equation}
0\le\frac{\sin^{n}n\theta}{\sin\theta\sin^{n-1}(n-1)\theta}\le\frac{n^{n}}{(n-1)^{n-1}}\label{eq:thetaineq}
\end{equation}
since $\sin n\theta/\sin\theta\le n$ and $\sin n\theta/\sin(n-1)\theta\le n/(n-1)$.
Thus each solution $z$ of (\ref{eq:zqconnection}) belongs to $\mathcal{C}$.
In the case $nb>a$, the equation (\ref{eq:zqconnection}) shows that
each value of $q$ gives $nb$ solutions of $z$ counting multiplicities.
Thus we have at least $nbp$ solutions of $H_{m}(z)$ on $\mathcal{C}$.
Lemma \ref{zerosB} gives us at least $rb$ more solutions of $H_{m}(z)$
on $\mathcal{C}$. Thus the number of roots of $H_{m}(z)$ on $\mathcal{C}$
is at least $nbp+rb=mb$. With a similar argument, when $nb\le a$,
the number of roots of $H_{m}(z)$ on $\mathcal{C}$ is at least $pa+rb$.
This number of roots equals the degree of $H_{m}(z)$ in Lemma \ref{degHz}.
Hence all the roots of $H_{m}(z)$ lie on $\mathcal{C}$. 

Let $\zeta_{0},\ldots,\zeta_{n-1}$ be the roots of $P_{\theta}(\zeta)$
for $\theta\in[0,\pi/n).$ We will show that the number of real roots
of
\begin{equation}
h(\theta)=\sum_{k=0}^{n-1}\frac{1}{\zeta_{k}^{m+1}P'_{\theta}(\zeta_{k})},\label{eq:Htheta}
\end{equation}
is $p$ where $m=np+r$, $0\le r<n$. With the convention that $\sin n\theta/\sin\theta=n$
when $\theta=0$, we have the lemma below.

\begin{lemma}\label{rootsP}

If $0\le\theta<\pi/n$ then, besides the two roots $e^{\pm i\theta},$
the remaining $n-2$ roots of the polynomial
\[
P_{\theta}(\zeta)=\zeta^{n}-\frac{\sin n\theta}{\sin\theta}\zeta+\frac{\sin(n-1)\theta}{\sin\theta}
\]
lie outside the closed unit disk, i.e., $|\zeta|>1$. 

\end{lemma}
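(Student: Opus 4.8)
The plan is to prove the stronger statement that the monic polynomial
\[
Q_\theta(\zeta):=\frac{P_\theta(\zeta)}{\zeta^{2}-2\cos\theta\,\zeta+1},
\]
whose roots are precisely the $n-2$ roots of $P_\theta$ other than $e^{\pm i\theta}$ (the factorization is legitimate because $e^{\pm i\theta}$ are roots of $P_\theta$, e.g.\ by Lemma~\ref{quotientlemma}), has all of its roots strictly outside the closed unit disk for every $\theta\in[0,\pi/n)$. The argument I have in mind is a continuity (argument-principle) argument with three ingredients: an explicit description of the family $Q_\theta$, a direct treatment of $\theta=0$, and the claim that $Q_\theta$ never acquires a zero on the unit circle. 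For the first ingredient I would verify coefficientwise, using $2\cos\theta\sin k\theta=\sin(k+1)\theta+\sin(k-1)\theta$, the polynomial identity
\[
\Bigl(\sum_{j=0}^{n-2}\sin((n-1-j)\theta)\,\zeta^{j}\Bigr)(\zeta^{2}-2\cos\theta\,\zeta+1)=\sin\theta\,P_\theta(\zeta),
\]
which yields $Q_\theta(\zeta)=\sum_{j=0}^{n-2}\frac{\sin((n-1-j)\theta)}{\sin\theta}\zeta^{j}$ for $\theta\in(0,\pi/n)$ and, under the stated convention, $Q_0(\zeta)=\sum_{j=0}^{n-2}(n-1-j)\zeta^{j}$; in particular $Q_\theta$ is a monic polynomial of degree $n-2$ whose coefficients depend continuously (indeed real-analytically) on $\theta\in[0,\pi/n)$.

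For $\theta=0$ one has $P_0(\zeta)=\zeta^{n}-n\zeta+(n-1)$, with a root of multiplicity exactly two at $\zeta=1$ (since $P_0(1)=P_0'(1)=0$ and $P_0''(1)=n(n-1)\ne0$), and every other root $\zeta_0$ of $P_0$ satisfies $1+\zeta_0+\cdots+\zeta_0^{n-1}=n$. If $|\zeta_0|\le1$, comparing with $\sum_{j=0}^{n-1}|\zeta_0|^{j}\le n$ forces $|\zeta_0|=1$ and forces each $\zeta_0^{j}$ to be a positive real, whence $\zeta_0=1$, a contradiction. Thus all $n-2$ roots of $Q_0$ lie strictly outside the closed unit disk.

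The main step — and the one I expect to be the real obstacle — is to show $Q_\theta$ has no zero on $|\zeta|=1$ when $\theta\in(0,\pi/n)$. Suppose $P_\theta(\zeta_0)=0$ with $|\zeta_0|=1$, and rewrite this as $\zeta_0^{n}=\frac{\sin n\theta}{\sin\theta}\zeta_0-\frac{\sin(n-1)\theta}{\sin\theta}$; on $(0,\pi/n)$ both coefficients are positive, and taking $|\cdot|^{2}$ of both sides gives
\[
\cos(\arg\zeta_0)=\frac{\sin^{2}n\theta+\sin^{2}(n-1)\theta-\sin^{2}\theta}{2\sin n\theta\,\sin(n-1)\theta},
\]
whose right-hand side equals $\cos\theta$ (by the identity $\sin n\theta\,\sin(n-2)\theta=\sin^{2}(n-1)\theta-\sin^{2}\theta$, or simply because $e^{i\theta}$ is itself such a root). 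Hence $\zeta_0=e^{\pm i\theta}$; and $e^{\pm i\theta}$ are simple roots of $P_\theta$, since $P_\theta'(e^{\pm i\theta})=ne^{\pm i(n-1)\theta}-\frac{\sin n\theta}{\sin\theta}$ has nonzero imaginary part $\pm n\sin((n-1)\theta)$ (because $(n-1)\theta\in(0,\pi)$), so they are not roots of $Q_\theta$. Therefore $Q_\theta$ has no zero on $|\zeta|=1$, and combining with the $\theta=0$ case the same holds for every $\theta\in[0,\pi/n)$.

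Finally, $N(\theta):=\frac{1}{2\pi i}\oint_{|\zeta|=1}\frac{Q_\theta'(\zeta)}{Q_\theta(\zeta)}\,d\zeta$ counts the zeros of $Q_\theta$ inside the unit circle; by the previous paragraph it is a well-defined, integer-valued function that is continuous on the connected interval $[0,\pi/n)$, hence constant, and $N(0)=0$ by the second paragraph. Thus $N\equiv0$: no root of $Q_\theta$ lies in the open unit disk, and since none lies on the circle, all $n-2$ roots satisfy $|\zeta|>1$, which is the lemma. I expect the delicate point to remain the unit-circle step above, because as $\theta\to(\pi/n)^{-}$ the polynomial degenerates — the pair $e^{\pm i\theta}$ together with a further root all tend to $|\zeta|=1$ — so crude coefficient estimates (e.g.\ the Enestr\"om--Kakeya bound) cannot work uniformly in $\theta$, and it is the modulus identity above that rescues the step.
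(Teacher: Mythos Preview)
Your proof is correct and complete; the modulus identity you isolate really does force any unimodular root of $P_\theta$ to be $e^{\pm i\theta}$, and the simplicity check plus the continuity of $N(\theta)$ finishes it cleanly.

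Your route, however, is genuinely different from the paper's. The paper does not factor out $Q_\theta$ or compute on the unit circle; instead it undoes the substitution $\zeta\mapsto e^{i\theta}\zeta$ to recast the equation as $f(\zeta)=f(q)$ with $f(z)=(z^{n}-1)/(z-1)$ and $q=e^{2i\theta}$, observes (with the aid of a figure) that $f$ maps the arc $\{e^{2i\theta}:-\pi/n<\theta<\pi/n\}$ bijectively onto a closed loop, and then invokes the argument principle to conclude that $f$ sends the open unit disk into the interior of that loop, so the only solution of $f(\zeta)=f(q)$ with $|\zeta|\le1$ is $\zeta=q$. Thus the paper applies the argument principle to the fixed map $f$ on the disk, whereas you apply it to the $\theta$-dependent family $Q_\theta$ and vary the parameter. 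The paper's approach is shorter and more geometric, but as written it leans on a picture and leaves the bijectivity and ``maps interior to interior'' claims unproved; your approach is more computational but entirely self-contained and uniform in $\theta$, and the explicit formula $Q_\theta(\zeta)=\sum_{j=0}^{n-2}\frac{\sin((n-1-j)\theta)}{\sin\theta}\zeta^{j}$ (a Dirichlet-kernel-type expression) is a nice byproduct that the paper does not record.
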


\begin{proof}

We replace $\zeta$ by $\zeta e^{-i\theta}$ in $P_{\theta}(\zeta)$
and then rewrite (\ref{eq:zetaquotient}) as 
\[
\frac{\zeta^{n}-1}{\zeta-1}=\frac{q^{n}-1}{q-1}.
\]
From (\ref{eq:ChebyshevU}), the polynomial map
\[
f(z)=\frac{z^{n}-1}{z-1}
\]
bijectively maps the arc $q=e^{2i\theta}$, $-\pi/n<\theta<\pi/n$,
to the biggest loop such as the one in in Figure 3 when $n=6$. Thus
by the Argument Principle Theorem it maps the open unit disk into
the interior of this loop. Hence the equation $f(\zeta)=f(q)$ does
not have a solution $|\zeta|\le1$ except $\zeta=q$. 

\begin{figure}
\begin{centering}
\includegraphics{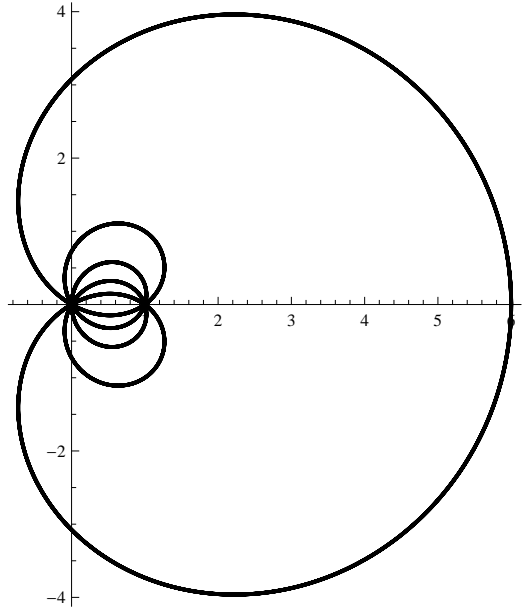}
\par\end{centering}

\protect\caption{The map $h(z)$ on the unit circle when $n=6$}
\end{figure}

\end{proof}

We will find $p+1$ values for $\theta$ where the sign of $h(\theta)$
alternates when $m$ is large and then apply the Intermediate Value
Theorem to obtain $p$ real roots of $h(\theta)$. Lemma \ref{rootsP}
and the definition of $h(\theta)$ in (\ref{eq:Htheta}) imply that
if $\theta$ does not approach $\pi/n$ as $m\rightarrow\infty$ and
$P'(\zeta_{k})\ne0$, the sign of $h(\theta)$ depends on the two
summands when $k=0$ and $k=1$ when $m$ is large. The second condition,
$P'(\zeta_{k})\ne0$, is given below.

\begin{lemma}\label{rootsdistinct}

If $0<\theta<\pi/n$ then the roots of the polynomial
\[
P_{\theta}(\zeta)=\zeta^{n}-\frac{\sin n\theta}{\sin\theta}\zeta+\frac{\sin(n-1)\theta}{\sin\theta}
\]
are distinct. When $\theta=0$, besides the double root at $\zeta=1$,
the remaining $n-2$ roots are distinct.

\end{lemma}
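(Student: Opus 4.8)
The plan is to show that the discriminant of $P_\theta(\zeta)$ does not vanish for $0<\theta<\pi/n$, and to analyze the degenerate case $\theta=0$ separately. Since $P_\theta$ is a trinomial, I would invoke the classical closed-form discriminant of a trinomial: for $\zeta^n + p\zeta + q$ the discriminant is, up to sign, a constant times $n^n q^{n-1} + (-1)^{n-1}(n-1)^{n-1}p^n$. Here $p = -\sin n\theta/\sin\theta$ and $q = \sin(n-1)\theta/\sin\theta$, so repeated roots occur exactly when
\[
n^n \left(\frac{\sin(n-1)\theta}{\sin\theta}\right)^{n-1} = (n-1)^{n-1}\left(\frac{\sin n\theta}{\sin\theta}\right)^n,
\]
i.e. when the quantity $\sin^n n\theta / \bigl(\sin\theta\,\sin^{n-1}(n-1)\theta\bigr)$ equals $n^n/(n-1)^{n-1}$. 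By the inequality \eqref{eq:thetaineq} already established in the paper, this ratio is bounded above by $n^n/(n-1)^{n-1}$, with equality forced only when both $\sin n\theta/\sin\theta = n$ and $\sin n\theta/\sin(n-1)\theta = n/(n-1)$ hold simultaneously. The first of these equalities holds only at $\theta=0$ (on the relevant interval, since $|\sin n\theta| < n|\sin\theta|$ strictly for $0<\theta<\pi$ — this is just the strict convexity/triangle-inequality bound for the Dirichlet-kernel-type sum $\sin n\theta = \sin\theta\sum \text{(cosine terms)}$). Hence for $0<\theta<\pi/n$ the discriminant is nonzero and all $n$ roots are distinct.

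For the case $\theta=0$, I would take the limit $\sin k\theta/\sin\theta \to k$ to get $P_0(\zeta) = \zeta^n - n\zeta + (n-1)$. This is precisely the "$\theta=0$ analogue" of the trinomial $y^m - my + m - 1$ highlighted in the introduction, and it factors as $(\zeta-1)^2(\zeta^{n-2} + 2\zeta^{n-3} + 3\zeta^{n-4} + \cdots + (n-1))$ — one checks $\zeta=1$ is a root of $P_0$ and of $P_0'(\zeta) = n\zeta^{n-1} - n = n(\zeta^{n-1}-1)$, and that it is exactly a double root since $P_0''(1) = n(n-1) \ne 0$. It then remains to show the cofactor $Q(\zeta) = \sum_{j=1}^{n-1} j\,\zeta^{n-1-j}$ has no repeated roots and no root at $\zeta=1$. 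The latter is immediate ($Q(1) = 1+2+\cdots+(n-1) \ne 0$). For the former, the cleanest route is the identity $(\zeta-1)^2 Q(\zeta) = \zeta^n - n\zeta + (n-1) = (\zeta-1)(\zeta^n-1)/(\zeta-1) \cdot$(something) — more precisely, multiplying $Q$ by $(1-\zeta)$ telescopes to $\sum_{j=0}^{n-2}\zeta^j - (n-1)\zeta^{n-1} = \frac{\zeta^{n-1}-1}{\zeta-1} - (n-1)\zeta^{n-1}$, so $(1-\zeta)^2 Q(\zeta) = \zeta^{n-1} - 1 - (n-1)\zeta^{n-1}(1-\zeta)\cdot(\zeta-1)^{-1}\cdot\ldots$; rather than chase constants I would instead directly argue that any repeated root $\zeta_0$ of $Q$ would be a root of $Q$ and $Q'$, hence (via the relation $(\zeta-1)^2 Q(\zeta) = P_0(\zeta)$, differentiated) a root of $P_0$ of multiplicity $\ge 3$, contradicting $P_0'''$ having controlled vanishing — equivalently, a root of $P_0$ of multiplicity $\ge 2$ other than $\zeta=1$, which the discriminant computation above (taken at a limit, or done directly for $P_0$) rules out. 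Alternatively and most simply: a repeated root $\zeta_0\ne 1$ of $P_0$ satisfies $P_0(\zeta_0)=P_0'(\zeta_0)=0$, i.e. $\zeta_0^{n-1}=1$ and $\zeta_0^n - n\zeta_0 + (n-1)=0$, giving $\zeta_0 - n\zeta_0 + n - 1 = 0$, so $(n-1)(1-\zeta_0)=0$, forcing $\zeta_0=1$, a contradiction.

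The main obstacle I anticipate is the strict inequality $|\sin n\theta| < n|\sin\theta|$ for $0<\theta<\pi/n$ and, more importantly, verifying that equality in \eqref{eq:thetaineq} can only occur at $\theta = 0$ rather than being attainable elsewhere in $[0,\pi/n)$; this is what separates "all roots distinct" from "a double root appears." Since \eqref{eq:thetaineq} is stated in the paper only as a non-strict bound, I would need to upgrade it to a strict bound on $(0,\pi/n)$ by noting that the two factored inequalities $\sin n\theta \le n\sin\theta$ and $\sin n\theta \le \frac{n}{n-1}\sin(n-1)\theta$ are each strict there — the first because $\sin n\theta/\sin\theta = U_{n-1}(\cos\theta)$ is a Chebyshev polynomial attaining its maximum $n$ only at $\cos\theta = 1$, and the second by an analogous argument or by a direct ratio computation. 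With that strictness in hand the trinomial discriminant formula closes the argument cleanly, and the $\theta=0$ case is handled by the short explicit computation above.
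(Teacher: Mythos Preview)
Your proposal is correct and follows essentially the same route as the paper: reduce the question of repeated roots to the vanishing of the trinomial discriminant, obtain the condition
\[
\frac{\sin^{n} n\theta}{\sin\theta\,\sin^{n-1}(n-1)\theta}=\frac{n^{n}}{(n-1)^{n-1}},
\]
and rule it out on $(0,\pi/n)$ via \eqref{eq:thetaineq}. The only cosmetic difference is that the paper derives this condition by hand---observing that a common root of $P_\theta$ and $P'_\theta$ must equal $\dfrac{n\sin(n-1)\theta}{(n-1)\sin n\theta}$ and substituting back into $P_\theta$---whereas you invoke the classical trinomial discriminant formula directly; and your treatment of the $\theta=0$ case (solving $\zeta_0^{n-1}=1$ and back-substituting to force $\zeta_0=1$) is more explicit than the paper's bare assertion.
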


\begin{proof}

If $\zeta$ is a root of $P_{\theta}(\zeta)$ then 
\begin{eqnarray*}
\zeta P'_{\theta}(\zeta) & = & n\zeta^{n}-\frac{\sin n\theta}{\sin\theta}\zeta\\
 & = & (n-1)\frac{\sin n\theta}{\sin\theta}\zeta-n\frac{\sin(n-1)\theta}{\sin\theta}.
\end{eqnarray*}
By the symmetry in the product definition of $\mathrm{Res}_{\zeta}(P_{\theta}(\zeta),P'_{\theta}(\zeta))$
in (\ref{eq:resdef}), the roots of $P_{\theta}(\zeta)$ are distinct
if and only if 
\[
P_{\theta}\left(\frac{n\sin(n-1)\theta}{(n-1)\sin n\theta}\right)\ne0.
\]
With the definition of $P_{\theta}(\zeta)$, this condition becomes
\[
\frac{\sin^{n}n\theta}{\sin\theta\sin^{n-1}(n-1)\theta}\ne\frac{n^{n}}{(n-1)^{n-1}}.
\]
From (\ref{eq:thetaineq}), the equation occurs only when $\theta=0$.
In this case, besides the double root at $1$, the remaining roots
of $P_{0}(\zeta)=\zeta^{n}-n\zeta+(n-1)\zeta$ are distinct. 

\end{proof}

We recall that $m=np+r$, $0\le r<n$. The lemma below shows how $h(\theta)$
alternates sign for $\theta$ not approaching $\pi/n$ as $m\rightarrow\infty$. 

\begin{lemma}\label{range3}

Let $\gamma$ be a fixed small real number and 
\[
\theta=\frac{\pi}{n}-\frac{l\pi}{m},\mbox{\qquad\ensuremath{\mbox{where }\qquad}}l=h+\frac{r}{n}
\]
and $h$ is a non-negative integer. If 
\[
\gamma m<l<\frac{m}{n},
\]
then the sign of $h(\theta)$ is $(-1)^{p-h+1}$ when $m$ is large. 

\end{lemma}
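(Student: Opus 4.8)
The plan is to isolate the two dominant terms $k=0$ and $k=1$ in the sum $h(\theta)=\sum_{k=0}^{n-1}\zeta_k^{-(m+1)}/P_\theta'(\zeta_k)$ and show that, in the specified range of $\theta$, these two terms together control the sign. First I would note that by Lemma \ref{rootsP}, the roots $\zeta_2,\ldots,\zeta_{n-1}$ satisfy $|\zeta_k|>1$, and moreover (for $\theta$ bounded away from $\pi/n$, which is guaranteed by $l<m/n$ together with $l>\gamma m$) they are bounded away from the unit circle uniformly in $m$; hence $|\zeta_k^{-(m+1)}|$ decays geometrically in $m$ for $k\ge 2$. By Lemma \ref{rootsdistinct} the derivatives $P_\theta'(\zeta_k)$ are nonzero and bounded away from $0$ on this range (again since $\theta$ stays away from $0$, using $l>\gamma m$), so the tail $\sum_{k\ge 2}$ is exponentially small. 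Thus the sign of $h(\theta)$ is the sign of
\[
S(\theta):=\frac{1}{\zeta_0^{m+1}P_\theta'(\zeta_0)}+\frac{1}{\zeta_1^{m+1}P_\theta'(\zeta_1)}
\]
for large $m$, provided $S(\theta)$ itself does not decay too fast.

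Next I would compute $S(\theta)$ explicitly using $\zeta_0=e^{-i\theta}$ and $\zeta_1=e^{i\theta}$. From the formula $\zeta P_\theta'(\zeta)=(n-1)\tfrac{\sin n\theta}{\sin\theta}\zeta-n\tfrac{\sin(n-1)\theta}{\sin\theta}$ derived in the proof of Lemma \ref{rootsdistinct}, one gets closed forms for $P_\theta'(e^{\pm i\theta})$ in terms of sines; these are complex conjugates of each other. Since $\zeta_0^{-(m+1)}=e^{i(m+1)\theta}$ and $\zeta_1^{-(m+1)}=e^{-i(m+1)\theta}$ are also conjugate, $S(\theta)=2\Re\bigl(e^{i(m+1)\theta}/P_\theta'(e^{-i\theta})\bigr)$, which after simplification should take the shape $c(\theta)\cos\bigl((m+1)\theta+\varphi(\theta)\bigr)$ for an explicit amplitude $c(\theta)\ne 0$ and phase $\varphi(\theta)$, both slowly varying in $\theta$. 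Substituting $\theta=\pi/n-l\pi/m$ with $l=h+r/n$ turns the argument $(m+1)\theta$ into $(m+1)\pi/n-(h+r/n)\pi\cdot\frac{m+1}{m}$; using $m=np+r$ so that $(m+1)\pi/n=p\pi+(r+1)\pi/n$, the leading part of the phase becomes $p\pi-h\pi$ plus lower-order corrections, so the cosine is $(-1)^{p-h}$ up to a sign coming from the $(r+1)\pi/n$ and $\varphi(\theta)$ contributions and an $o(1)$ error. Tracking these fixed contributions carefully should yield sign $(-1)^{p-h+1}$, matching the claim.

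The main obstacle, I expect, is the bookkeeping in the previous paragraph: pinning down the \emph{exact} phase $\varphi(\theta)$ and the residual angle so that the overall sign comes out to $(-1)^{p-h+1}$ rather than its negative, and confirming that the amplitude $c(\theta)$ stays bounded away from zero on the whole range $\gamma m<l<m/n$ so that the exponentially small tail genuinely cannot flip the sign. One has to be careful near the two ends of the range: as $l\to m/n$ (i.e. $\theta\to 0$) the quantities $\sin(n-1)\theta/\sin\theta\to n-1$ and the formulas degenerate mildly (this is exactly the $\theta=0$ case of Lemmas \ref{rootsP} and \ref{rootsdistinct}, where a double root appears), so the hypothesis $l>\gamma m$ is what keeps us safely away; and as $l$ grows the error terms must be shown to remain $o(1)$ uniformly. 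I would handle uniformity by noting all the sines involved are continuous nonvanishing functions of $\theta$ on a compact subinterval of $(0,\pi/n)$, giving uniform bounds, and then the $1/m$ perturbation $\theta=\pi/n-l\pi/m$ only shifts the phase by a controlled amount. Once the sign of $S(\theta)$ at these sample points is established, the conclusion for $h(\theta)$ is immediate from the geometric-decay estimate on the tail.
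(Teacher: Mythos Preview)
Your overall strategy matches the paper's: split $h(\theta)$ into the two unit-circle terms $k=0,1$ and an exponentially small tail, then determine the sign from the main part. However, two points deserve correction.

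First, you have the endpoint bookkeeping reversed. The hypothesis $l>\gamma m$ forces $\theta<\pi/n-\gamma\pi$, i.e.\ it keeps $\theta$ bounded away from $\pi/n$, not from $0$. The upper bound $l<m/n$ only gives $\theta>0$, so in this lemma $\theta$ \emph{can} approach $0$ (in fact $\theta$ can be as small as $\pi/m$). This is harmless for the tail estimate because Lemmas~\ref{rootsP} and~\ref{rootsdistinct} cover $\theta=0$ as well: the roots $\zeta_2,\dots,\zeta_{n-1}$ remain simple and strictly outside the unit circle there, so compactness on $[0,\pi/n-\gamma\pi]$ gives the uniform bounds you need. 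But it does mean your amplitude $c(\theta)$ is not bounded away from~$0$ in the way you suggest; the main term can be as small as a fixed power of $1/m$, and the point is only that this still dominates an exponentially small tail.

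Second, and more importantly, you are making the phase analysis harder than necessary. Rather than writing $S(\theta)=c(\theta)\cos\bigl((m+1)\theta+\varphi(\theta)\bigr)$ and chasing the residual angles $(r+1)\pi/n$ and $\varphi(\theta)$ with an ``$o(1)$ error'', observe that the choice $l=h+r/n$ is rigged so that
\[
m\theta \;=\; m\Bigl(\tfrac{\pi}{n}-\tfrac{l\pi}{m}\Bigr)\;=\;\tfrac{(np+r)\pi}{n}-\Bigl(h+\tfrac{r}{n}\Bigr)\pi\;=\;(p-h)\pi
\]
\emph{exactly}. Using $P'_\theta(\zeta)=n\zeta^{n-1}-\sin n\theta/\sin\theta$ one finds directly
\[
S(\theta)=\frac{2n\cos(m+n)\theta-2\cos(m+1)\theta\,\dfrac{\sin n\theta}{\sin\theta}}{|P'_\theta(e^{i\theta})|^{2}},
\]
and since $\sin m\theta=0$, $\cos m\theta=(-1)^{p-h}$, the numerator collapses to
\[
(-1)^{p-h+1}\,\frac{2}{\sin\theta}\bigl(\cos\theta\sin n\theta-n\cos n\theta\sin\theta\bigr),
\]
whose bracketed factor is strictly positive on $(0,\pi/n]$ (its derivative is $(n^2-1)\sin\theta\sin n\theta\ge 0$ and it vanishes only at $\theta=0$). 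This gives the sign $(-1)^{p-h+1}$ with no approximation and no phase $\varphi$ to track; the only remaining check is that at the small end $\theta\ge\pi/m$ the bracketed factor is of order $m^{-3}$, hence still dominates the exponentially decaying tail. This is exactly how the paper proceeds, and it removes the ``main obstacle'' you anticipated.
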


\begin{proof}

If $2\le k\le n-1$ then Lemmas \ref{rootsP} and \ref{rootsdistinct}
imply that $|\zeta_{k}|>1+\epsilon$ and $|P'_{\theta}(\zeta_{k})|>\epsilon$
for some small $\epsilon>0$ which does not depend on $m$. Thus 
\[
\frac{1}{\zeta_{k}^{m+1}P'_{\theta}(\zeta_{k})}
\]
approaches $0$ exponentially when $m$ approaches $\infty$. 

Since $\zeta_{0}=e^{-i\theta}$ and $\zeta_{1}=e^{i\theta}$, the
sum of the first two terms of (\ref{eq:Htheta}) when $k=0$ and $k=1$
is
\[
\frac{2\Re\left(e^{i(m+1)\theta}P'_{\theta}(e^{i\theta})\right)}{|P'(e^{i\theta})|^{2}}.
\]
With fact that $P'_{\theta}(\zeta)=n\zeta^{n-1}-\sin n\theta/\sin\theta$,
this sum becomes
\begin{equation}
\frac{2n\cos(m+n)\theta-2\cos(m+1)\theta\sin n\theta/\sin\theta}{|P'_{\theta}(e^{i\theta})|^{2}}.\label{eq:summainterms}
\end{equation}
When $\theta=\pi/n-l\pi/m=(p-h)\pi/m$ we have $\cos m\theta=(-1)^{p-h}$.
The numerator of (\ref{eq:summainterms}) becomes
\begin{equation}
(-1)^{p-h+1}\frac{2}{\sin\theta}\left(\cos\theta\sin n\theta-n\cos n\theta\sin\theta\right).\label{eq:mainterms-h}
\end{equation}
Since $\theta=(p-h)\pi/m>0$ where $p$ and $h$ are non-negative
integers, we have $\theta\ge\pi/m>1/m$. The function $\cos\theta\sin n\theta-n\cos n\theta\sin\theta$
is positive and increasing on $[0,\pi/n]$ since its derivative is
$(n^{2}-1)\sin\theta\sin n\theta\ge0$. Thus the condition $\theta>1/m$
implies 
\[
\cos\theta\sin n\theta-n\cos n\theta\sin\theta>\cos\frac{1}{m}\sin\frac{n}{m}-n\cos\frac{n}{m}\sin\frac{1}{m}.
\]
When $m$ is large, the right side is close to $n^{3}/2m^{3}$ which
is larger than all of the exponentially decaying terms where $2\le k<n$.
Thus the sign of $h(\theta)$ is $(-1)^{p-h+1}$ by (\ref{eq:mainterms-h}). 

\end{proof}

We next consider the case when $\theta$ approaches $\pi/n$. 

\begin{lemma}

Let $\theta=\pi/n-l\pi/m$ where $l\in\mathbb{R}^{+}$. There is a
sufficiently small $\delta$ such that if $l<\delta m$ then 
\begin{eqnarray}
P_{\theta}(\zeta) & = & \zeta^{n}+1-\frac{nl\pi}{m\sin(\pi/n)}\zeta+\frac{\cos(\pi/n)}{\sin(\pi/n)}.\frac{nl\pi}{m}+O_{n}\left(\frac{l^{2}}{m^{2}}\right),\label{eq:Pestimate}\\
P'_{\theta}(\zeta) & = & n\zeta^{n-1}-\frac{\sin(nl\pi/m)}{\sin(\pi/n)}+O_{n}\left(\frac{l^{2}}{m^{2}}\right),\label{eq:derivPestimate}\\
\zeta_{k} & = & e_{k}\left(1+\frac{l\pi(\cos(\pi/n)-e_{k})}{m\sin(\pi/n)}\right)+O_{n}\left(\frac{l^{2}}{m^{2}}\right).\label{eq:rootestimate}
\end{eqnarray}

\end{lemma}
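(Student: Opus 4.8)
The plan is to treat all three estimates as first-order Taylor expansions in the small parameter $\epsilon:=l\pi/m$, so that $\theta=\pi/n-\epsilon$ and the error terms $O_n(l^2/m^2)$ are exactly $O_n(\epsilon^2)$; the hypothesis $l<\delta m$ is just the statement that $|\epsilon|<\delta\pi$ is small. First I would record the trigonometric identities coming from $\sin(\pi-x)=\sin x$: $\sin n\theta=\sin(n\epsilon)$, $\sin(n-1)\theta=\sin(\pi/n+(n-1)\epsilon)$, and $\sin\theta=\sin(\pi/n-\epsilon)$. Expanding to first order gives $\sin n\theta=n\epsilon+O_n(\epsilon^3)$, $\sin(n-1)\theta=\sin(\pi/n)+(n-1)\epsilon\cos(\pi/n)+O_n(\epsilon^2)$, and $\sin\theta=\sin(\pi/n)-\epsilon\cos(\pi/n)+O_n(\epsilon^2)$. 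Dividing and using $1/(1-x)=1+x+O(x^2)$ yields $\frac{\sin n\theta}{\sin\theta}=\frac{n\epsilon}{\sin(\pi/n)}+O_n(\epsilon^2)$ and, more precisely, $\frac{\sin n\theta}{\sin\theta}=\frac{\sin(n\epsilon)}{\sin(\pi/n)}+O_n(\epsilon^2)$, together with $\frac{\sin(n-1)\theta}{\sin\theta}=1+\frac{n\epsilon\cos(\pi/n)}{\sin(\pi/n)}+O_n(\epsilon^2)$. Substituting these into $P_\theta(\zeta)=\zeta^n-\frac{\sin n\theta}{\sin\theta}\zeta+\frac{\sin(n-1)\theta}{\sin\theta}$ and into $P'_\theta(\zeta)=n\zeta^{n-1}-\frac{\sin n\theta}{\sin\theta}$ immediately gives \eqref{eq:Pestimate} and \eqref{eq:derivPestimate}, the implied constants being uniform for $\zeta$ in any fixed bounded region (which, by the next paragraph, is where the roots $\zeta_k$ live).

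For the roots, I would first note that at $\epsilon=0$ the polynomial is $P_{\pi/n}(\zeta)=\zeta^n+1$, whose roots $e_0,\dots,e_{n-1}$ (the solutions of $e_k^n=-1$) are simple. Since the coefficients of $P_\theta$ are analytic in $\epsilon$ near $\epsilon=0$ (because $\sin\theta\neq0$ there), a Rouch\'e argument shows that for $\delta$ small each disk around $e_k$ contains exactly one, simple, root $\zeta_k=\zeta_k(\epsilon)$, analytic in $\epsilon$, with $\zeta_k(0)=e_k$; in particular all $n$ roots stay distinct and confined to a bounded ($n$-dependent) region. Writing $\zeta_k=e_k+c_k\epsilon+O_n(\epsilon^2)$ and substituting into \eqref{eq:Pestimate}, expand $(e_k+c_k\epsilon)^n=e_k^n+ne_k^{n-1}c_k\epsilon+O_n(\epsilon^2)=-1+ne_k^{n-1}c_k\epsilon+O_n(\epsilon^2)$; collecting the coefficient of $\epsilon$ gives $ne_k^{n-1}c_k-\frac{ne_k}{\sin(\pi/n)}+\frac{n\cos(\pi/n)}{\sin(\pi/n)}=0$. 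Using $e_k^{n-1}=e_k^n/e_k=-1/e_k$ and solving yields $c_k=\frac{e_k(\cos(\pi/n)-e_k)}{\sin(\pi/n)}$, which is exactly \eqref{eq:rootestimate}.

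There is no real obstacle here; the steps are routine and the only care needed is bookkeeping. I would be explicit that the displayed estimates hold with error terms uniform over the bounded $\zeta$-region containing the roots, so that evaluating \eqref{eq:Pestimate} at $\zeta=\zeta_k$ is legitimate, and that the very meaning of the branch $\zeta_k(\epsilon)$ in \eqref{eq:rootestimate} rests on the simplicity of the roots of $\zeta^n+1$, which is what forces the smallness condition $l<\delta m$. A useful consistency check is that the two exact roots $\zeta=e^{\pm i\theta}=e^{\pm i\pi/n}e^{\mp i\epsilon}$ satisfy \eqref{eq:rootestimate} with $e_k=e^{\pm i\pi/n}$, since $\cos(\pi/n)-e^{\pm i\pi/n}=\mp i\sin(\pi/n)$.
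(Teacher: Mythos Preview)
Your proof is correct and follows essentially the same approach as the paper: Taylor-expand the trigonometric coefficients of $P_\theta$ in the small parameter $l\pi/m$ to get \eqref{eq:Pestimate} and \eqref{eq:derivPestimate}, then write $\zeta_k=e_k+(\text{perturbation})$, substitute into \eqref{eq:Pestimate}, and solve for the first-order term using $e_k^n=-1$. The paper is terser (it simply posits $\zeta_k=e_k+\epsilon$ and reads off $\epsilon$ from the resulting equation), while you additionally justify the existence and analytic dependence of the branch $\zeta_k(\epsilon)$ via a Rouch\'e/simple-roots argument; this extra care is welcome but does not change the underlying method.
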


\begin{proof}

The identity (\ref{eq:Pestimate}) follows from the definition of
$P_{\theta}(\zeta)$ in (\ref{eq:Pthetadef}) and basic asymptotic
computations. We differentiate $P_{\theta}(\zeta)$ and get
\begin{eqnarray*}
P'_{\theta}(\zeta) & = & n\zeta^{n-1}-\frac{\sin n\theta}{\sin\theta}\\
 & = & n\zeta^{n-1}-\frac{\sin(nl\pi/m)}{\sin(\pi/n)}+O_{n}\left(\frac{l^{2}}{m^{2}}\right).
\end{eqnarray*}
Suppose $\zeta_{k}=e_{k}+\epsilon$ where $\epsilon\in\mathbb{C}$
and $e_{k}=e^{(2k-1)i\pi/n}$, $0\le k\le n-1$. The facts that $P_{\theta}(\zeta_{k})=0$
and (\ref{eq:Pestimate}) imply that 
\[
-\frac{n\epsilon}{e_{k}}+\frac{nl\pi}{m\sin\pi/n}\left(\cos\frac{\pi}{n}-e_{k}\right)+O_{n}\left(\epsilon^{2}+\frac{l\epsilon}{m}+\frac{l^{2}}{m^{2}}\right)=0.
\]
This gives
\begin{eqnarray*}
\epsilon & = & \frac{l\pi e_{k}}{m\sin\pi/n}\left(\cos\frac{\pi}{n}-e_{k}\right)+O_{n}\left(\epsilon^{2}+\frac{l\epsilon}{m}+\frac{l^{2}}{m^{2}}\right)\\
 & = & \frac{l\pi e_{k}}{m\sin\pi/n}\left(\cos\frac{\pi}{n}-e_{k}\right)+O_{n}\left(\frac{l^{2}}{m^{2}}\right).
\end{eqnarray*}
Thus 
\[
\zeta_{k}=e_{k}\left(1+\frac{l\pi(\cos\pi/n-e_{k})}{m\sin\pi/n}\right)+O_{n}\left(\frac{l^{2}}{m^{2}}\right).
\]

\end{proof}

\begin{lemma}\label{range2}

Let $\delta>0$ be a fixed real number and 
\[
\theta=\frac{\pi}{n}-\frac{l\pi}{m},\qquad\mbox{where}\qquad l=h+\frac{r}{n}
\]
and $h$ is a non-negative integer. There is a sufficiently small
$\gamma$ such that if $\delta\sqrt{m}<l<\gamma m$ the sign of 
\[
\sum_{k=0}^{n-1}\frac{1}{\zeta_{k}^{m+1}P'_{\theta}(\zeta_{k})}
\]
is $(-1)^{p-h+1}$ when $m$ is large. 

\end{lemma}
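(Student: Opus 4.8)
The plan is to split $h(\theta)$ into the two terms coming from the unit-modulus roots $\zeta_{0}=e^{-i\theta}$ and $\zeta_{1}=e^{i\theta}$, whose contribution will be shown to have absolute value bounded away from $0$ with sign $(-1)^{p-h+1}$, and the remaining terms $k=2,\dots,n-1$, whose contribution will be shown to be exponentially small in $l$. The difference from Lemma \ref{range3} is that $\theta$ now approaches $\pi/n$, so the separation of $|\zeta_{k}|$ from $1$ for $k\ge 2$ must be quantified through the root expansion (\ref{eq:rootestimate}), not merely through the qualitative bound of Lemma \ref{rootsP}.

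First I would evaluate the two main terms. Since $m=np+r$ and $l=h+r/n$, we have $m\theta=m\pi/n-l\pi=(p-h)\pi$, so $\cos m\theta=(-1)^{p-h}$ and $\sin m\theta=0$; inserting this into (\ref{eq:summainterms}) exactly as in the derivation of (\ref{eq:mainterms-h}) gives
\[
\frac{1}{\zeta_{0}^{m+1}P'_{\theta}(\zeta_{0})}+\frac{1}{\zeta_{1}^{m+1}P'_{\theta}(\zeta_{1})}=(-1)^{p-h+1}\,\frac{2\bigl(\cos\theta\sin n\theta-n\cos n\theta\sin\theta\bigr)}{\sin\theta\,|P'_{\theta}(e^{i\theta})|^{2}}.
\]
Throughout the range here, $\theta$ lies in $[\pi/n-\gamma\pi,\pi/n)$. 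On this interval the factor $g(\theta):=\cos\theta\sin n\theta-n\cos n\theta\sin\theta$ is, as observed in the proof of Lemma \ref{range3}, positive and increasing (its derivative being $(n^{2}-1)\sin\theta\sin n\theta\ge0$), so $g(\theta)\ge g(\pi/n-\gamma\pi)>0$ once $\gamma<1/n$; and $\sin\theta\le1$ while $|P'_{\theta}(e^{i\theta})|\le n+|\sin n\theta|/\sin\theta$ is bounded above there. Hence the displayed sum has modulus at least a fixed positive constant $c_{0}=c_{0}(n,\gamma)$ and sign $(-1)^{p-h+1}$.

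Next I would bound the tail terms. Set $d_{k}:=\cos(\pi/n)-\Re e_{k}$; since $e_{0}$ and $e_{1}$ are the only $n$-th roots of $-1$ with maximal real part, $d_{k}>0$ for $2\le k\le n-1$. From (\ref{eq:rootestimate}) one computes $|\zeta_{k}|^{2}=1+\frac{2\pi d_{k}}{\sin(\pi/n)}\cdot\frac{l}{m}+O_{n}(l^{2}/m^{2})$, so if $\gamma$ is chosen small enough that the error is dominated by half the linear term uniformly for $l<\gamma m$, then $|\zeta_{k}|\ge1+c_{k}l/m$ for some $c_{k}>0$, whence $|\zeta_{k}|^{m+1}\ge e^{c_{k}l/2}$ for $m$ large. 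Likewise (\ref{eq:derivPestimate}) together with $\zeta_{k}=e_{k}+O_{n}(l/m)$ gives $P'_{\theta}(\zeta_{k})=ne_{k}^{n-1}+O_{n}(l/m)$, so $|P'_{\theta}(\zeta_{k})|\ge n/2$ for $\gamma$ small and $m$ large. Each tail term is therefore $O_{n}(e^{-cl})$ with $c=\tfrac12\min_{2\le k\le n-1}c_{k}>0$, and since $l>\delta\sqrt{m}$ the entire tail is $O_{n}(e^{-c\delta\sqrt{m}})$, which tends to $0$.

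Combining, $h(\theta)$ is a quantity of modulus $\ge c_{0}$ and sign $(-1)^{p-h+1}$ plus a term tending to $0$, so for $m$ large its sign is $(-1)^{p-h+1}$. The main obstacle is the uniformity in the tail estimate: one must fix $\gamma$ (small) before letting $m\to\infty$, so that the $O_{n}(l^{2}/m^{2})$ correction in (\ref{eq:rootestimate}) can never overtake the linear-in-$l/m$ gain in $|\zeta_{k}|$ for any admissible $l<\gamma m$, and then confirm that the resulting decay $e^{-c\delta\sqrt{m}}$ genuinely dominates the fixed lower bound $c_{0}$ on the main term; everything else reduces to routine manipulation of the asymptotic expansions already established.
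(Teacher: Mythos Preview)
Your proof is correct and follows essentially the same approach as the paper: split off the two unit-modulus terms $k=0,1$, show they carry the sign $(-1)^{p-h+1}$ with modulus bounded away from $0$, and use the root expansion (\ref{eq:rootestimate}) to make the tail $k\ge2$ exponentially small in $l>\delta\sqrt{m}$. The only cosmetic difference is in the main-term analysis --- the paper discards the $\cos(m+1)\theta\,\sin n\theta/\sin\theta$ contribution as small (since $\sin n\theta\to0$ when $\gamma$ is small) and reads the sign directly off $\cos(m+n)\theta=(-1)^{p-h}\cos n\theta$, whereas you retain the full expression (\ref{eq:mainterms-h}) and bound it from below via the monotonicity of $g(\theta)$; both routes give a main term of order $2/n$ with the stated sign.
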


\begin{proof}

From (\ref{eq:rootestimate}), we can choose $\gamma$ small enough
so that 
\begin{eqnarray*}
|\zeta_{k}| & > & 1+\frac{l\pi(\cos\pi/n-\cos(2k-1)\pi/n)}{2m\sin\pi/n}\\
 & > & 1+\frac{\delta\pi(\cos\pi/n-\cos(2k-1)\pi/n)}{2\sqrt{m}\sin\pi/n}.
\end{eqnarray*}
Thus if $k\ne0,1$ then the inequality above and (\ref{eq:derivPatroot})
imply that 
\[
\frac{1}{\left|\zeta_{k}^{m+1}P'(\zeta_{k})\right|}
\]
approaches $0$ when $m\rightarrow\infty$. From (\ref{eq:summainterms}),
the sum of two terms when $k=0,1$ is
\[
\frac{2n\cos(m+n)\theta-2\cos(m+1)\theta\sin n\theta/\sin\theta}{|P'_{\theta}(e^{i\theta})|^{2}}.
\]
When $\gamma$ is small, the quantity $\cos(m+1)\theta\sin n\theta/\sin\theta$
is small since 
\[
\theta=\frac{\pi}{n}-\frac{l\pi}{m}.
\]
Also $\left|P'_{\theta}(e^{i\theta})\right|^{2}$ is close to $n^{2}$
by (\ref{eq:derivPatroot}). Thus the sign of (\ref{eq:Htheta}) is
determined by the sign of 
\[
\cos\left(m\theta+n\theta\right)=(-1)^{p-h}\cos n\theta,
\]
which is $(-1)^{p-h+1}$, when $m$ is large.

\end{proof}

\begin{lemma}\label{range1}

Let 
\[
\theta=\frac{\pi}{n}-\frac{l\pi}{m},\qquad\mbox{where}\qquad l=h+\frac{r}{n}
\]
and $h$ is a non-negative integer. There is a sufficiently small
$\delta$ so that if $1\le l<\delta\sqrt{m}$ the sign of 
\[
h(\theta)=\sum_{k=0}^{n-1}\frac{1}{\zeta_{k}^{m+1}P'_{\theta}(\zeta_{k})}
\]
is $(-1)^{p-h+1}$ when $m$ is large. 

\end{lemma}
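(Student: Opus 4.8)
Following Lemmas \ref{range3} and \ref{range2}, the plan is to write $h(\theta)=M(\theta)+E(\theta)$, where $M(\theta)$ is the sum of the two terms $k=0,1$ in (\ref{eq:Htheta}) and $E(\theta)=\sum_{k=2}^{n-1}1/(\zeta_k^{m+1}P'_\theta(\zeta_k))$; the difference from the previous lemmas is that $l$ need not grow, so the estimate $|E(\theta)|<|M(\theta)|$ can fail and $h(\theta)$ must be computed exactly for small $l$. By (\ref{eq:rootestimate}) and (\ref{eq:derivPatroot}), for $1\le l<\delta\sqrt m$ and $m$ large there is a constant $c=c(n)>0$ with $|\zeta_k|\ge1+cl/m$ and $|P'_\theta(\zeta_k)|\ge n/2$ for $2\le k\le n-1$, so $|E(\theta)|\le\frac{2(n-2)}{n}e^{-cl/2}$, while as in Lemma \ref{range3} one has $M(\theta)=(-1)^{p-h+1}\big(2n\cos(nl\pi/m)+2\cot\theta\sin(nl\pi/m)\big)/|P'_\theta(e^{i\theta})|^{2}=(-1)^{p-h+1}(2/n+o(1))$, because $nl\pi/m\to0$ uniformly on the range and $|P'_\theta(e^{i\theta})|^{2}\to n^{2}$. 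Choosing $L_0=L_0(n)$ with $e^{-cL_0/2}<1/(2(n-2))$, for $L_0\le l<\delta\sqrt m$ we get $\operatorname{sign}h(\theta)=\operatorname{sign}M(\theta)=(-1)^{p-h+1}$.

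For the remaining finitely many values $l=h+r/n$ with $1\le h\le L_0$, fix one and let $m\to\infty$. By (\ref{eq:rootestimate}) and (\ref{eq:derivPatroot}), $\zeta_k^{m+1}\sim e_k^{m+1}e^{lw_k}$ and $P'_\theta(\zeta_k)\to ne_k^{n-1}$, where $w_k=\pi(\cos(\pi/n)-e_k)/\sin(\pi/n)$ and $e_k=e^{(2k-1)i\pi/n}$; using $e_k^{m+n}=(-1)^{p+1}e_k^{r}$ and $e^{-lw_k}=e^{-l\pi\cot(\pi/n)}e^{xe_k}$ with $x=l\pi/\sin(\pi/n)$, every summand of (\ref{eq:Htheta}) converges and
\[
h(\theta)\longrightarrow\frac{(-1)^{p+1}}{n}e^{-l\pi\cot(\pi/n)}\sum_{k=0}^{n-1}e_k^{-r}e^{xe_k}.
\]
Since the $e_k$ are exactly the roots of $\zeta^n=-1$ and $\sum_k e_k^{s}$ equals $n(-1)^{s/n}$ when $n\mid s$ and $0$ otherwise, the sum equals $n\sum_{j\ge0}(-1)^jx^{r+jn}/(r+jn)!$, so the limit is a fixed nonzero number and it suffices to prove $\operatorname{sign}S=(-1)^h$ for $S=\sum_{k=0}^{n-1}e_k^{-r}e^{xe_k}$.

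Isolate the two roots of largest real part, $e_0=\overline{e_1}$ with $\Re e_1=\cos(\pi/n)$. The choice $x=l\pi/\sin(\pi/n)$ gives $x\sin(\pi/n)=l\pi=h\pi+r\pi/n$, whence $2\Re(e_1^{-r}e^{xe_1})=2e^{x\cos(\pi/n)}\cos(x\sin(\pi/n)-r\pi/n)=2(-1)^he^{x\cos(\pi/n)}$ and $S=2(-1)^he^{x\cos(\pi/n)}+\sum_{k=2}^{n-1}e_k^{-r}e^{xe_k}$. When $n$ is small enough that $\sum_{k=2}^{n-1}e^{x(\cos((2k-1)\pi/n)-\cos(\pi/n))}<2$ --- which, since $l\ge1$, holds whenever $2\pi\sin(2\pi/n)>\log\tfrac{n-2}{2}$ --- the second sum is dominated in modulus and $\operatorname{sign}S=(-1)^h$. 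For larger $n$ one must instead show these terms reinforce the first rather than cancelling it: writing $e_k^{-r}e^{xe_k}=e^{x\cos((2k-1)\pi/n)}\exp\!\big(i(x\sin((2k-1)\pi/n)-(2k-1)r\pi/n)\big)$ and using $\sin((2k-1)\pi/n)/\sin(\pi/n)=(2k-1)+O_n((2k-1)^3/n^2)$, one checks that for every $k$ whose term is not exponentially negligible the argument equals $(2k-1)h\pi+o(1)$, so the real part of that term has sign $(-1)^h$. In either case $\operatorname{sign}S=(-1)^h$, so the limit of $h(\theta)$ has sign $(-1)^{p+1}(-1)^h=(-1)^{p-h+1}$, and therefore so does $h(\theta)$ for $m$ large.

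The hardest step is this last sign determination when $n$ is large: unlike in Lemmas \ref{range3} and \ref{range2} the two dominant terms do not control $S$ by a triangle inequality, and one must combine the exact relation $x\sin(\pi/n)-r\pi/n=h\pi$ with the near-integrality of the Chebyshev-type quotients $\sin((2k-1)\pi/n)/\sin(\pi/n)$ --- the $\theta$-analogue of the trinomial $y^n-ny+(n-1)$ from the introduction --- to rule out a sign reversal by the subdominant exponentials. The auxiliary asymptotics (\ref{eq:Pestimate})--(\ref{eq:rootestimate}) and the boundary case $h=0$ (that is, $l<1$) are invoked where needed.
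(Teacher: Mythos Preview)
Your argument shares the paper's framework---both reduce via (\ref{eq:bigthetasummand}) to the sign of the finite sum $S=\sum_k e_k^{-r}e^{xe_k}$ with $x=l\pi/\sin(\pi/n)$, and both split into a small-$n$ case (triangle inequality on the $k\ge2$ terms) and a large-$n$ case---but diverges in two places. First, you insert an extra threshold $L_0$ in $l$: for $l\ge L_0$ a crude bound on $E(\theta)$ suffices, and the finitely many $l<L_0$ are handled by passing to the $m\to\infty$ limit. The paper instead treats all $l\ge1$ at once, using $l\ge1$ to bound each tail summand by its value at $l=1$ and verifying by computer that the resulting sum is below $2/n$ for $n<90$; your split is conceptually clean but, since $L_0(n)$ grows roughly like $n\log n$, it enlarges the residual case. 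Second, and more substantively, for large $n$ you determine $\operatorname{sign}S$ by a phase analysis, arguing that the non-negligible terms have argument $(2k-1)h\pi+o(1)$; this is correct in principle but the ``one checks that'' hides a delicate balance---one must pick a cutoff $K$ so that the phase error $O(l(2k-1)^3/n^2)$ stays below $\pi/2$ for $k\le K$ while the Gaussian-type tail $\sum_{s>K}e^{-l\pi^2s^2/(2n)}$ is negligible, and verify this for each fixed $n$ rather than asymptotically. The paper takes a shorter route: using the power-series identity $S=n\sum_{j\ge0}(-1)^jx^{r+jn}/(r+jn)!$ (which you also derive but then set aside), it substitutes $x\approx ln=nh+r$, observes that the summands increase in modulus up to $j=h$ and decrease thereafter, and concludes by the alternating-series estimate that $\operatorname{sign}S$ equals the sign of the $j=h$ term, namely $(-1)^h$. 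This bypasses the phase-balancing entirely and is the step you would most benefit from borrowing.
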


\begin{proof}

With a sufficiently small $\delta$, the equation (\ref{eq:rootestimate})
gives
\begin{eqnarray*}
\zeta_{k}^{m+1} & = & e_{k}^{m+1}\left(1+\frac{l\pi(\cos\pi/n-e_{k})}{m\sin\pi/n}\right)^{m}\left(1+O_{n}\left(\frac{l^{2}}{m}\right)\right)\\
 & = & e_{k}^{m+1}e^{l\pi(\cos\pi/n-e_{k})/\sin\pi/n}\left(1+O_{n}\left(\frac{l^{2}}{m}\right)\right).
\end{eqnarray*}
Also equations (\ref{eq:derivPestimate}) and (\ref{eq:rootestimate})
imply that 
\begin{eqnarray}
P'_{\theta}(\zeta_{k}) & = & ne_{k}^{n-1}\left(1+O_{n}\left(\frac{l}{m}\right)\right).\label{eq:derivPatroot}
\end{eqnarray}
We combine (\ref{eq:derivPatroot}) with the estimation of $\zeta_{k}^{m+1}$
to get 
\begin{equation}
\zeta_{k}^{m+1}P'_{\theta}(\zeta_{k})=ne_{k}^{m+n}e^{l\pi(\cos\pi/n-e_{k})/\sin\pi/n}\left(1+O_{n}\left(\frac{l^{2}}{m}\right)\right).\label{eq:bigthetasummand}
\end{equation}
According to (\ref{eq:bigthetasummand}), the summation of the two
terms of $h(\theta)$ when $k=0$ and $k=1$ is

\[
\frac{2}{n}\Re\left(e_{k}^{m+n}e^{-l\pi i}\right)=\frac{2}{n}\Re e^{\pi i(m-ln+n)/n}\left(1+O_{n}\left(\frac{l^{2}}{m}\right)\right).
\]
With $l=h+r/n$, the expression becomes
\begin{equation}
\frac{(-1)^{p-h+1}2}{n}\left(1+O_{n}\left(\frac{l^{2}}{m}\right)\right).\label{eq:maintermsmalltheta}
\end{equation}
We now consider the remaining terms of $h(\theta)$. From (\ref{eq:bigthetasummand}),
the summation of these terms is bounded by
\[
\frac{1}{n}\sum_{k=2}^{n-1}\frac{1}{e^{\pi\left(\cos\pi/n-\cos(2k-1)\pi/n\right)/\sin\pi/n}}\left(1+O_{n}\left(\frac{l^{2}}{m}\right)\right).
\]
We use computer algebra to check that if $n<90$ then 
\[
\frac{1}{n}\sum_{k=2}^{n-1}\frac{1}{e^{\pi(\cos\pi/n-\cos(2k-1)\pi/n)/\sin\pi/n}}<\frac{2}{n}.
\]
From (\ref{eq:maintermsmalltheta}), the lemma holds when $n<90$. 

We now consider the case $n\ge90$. From (\ref{eq:bigthetasummand}),
the sign of $h(\theta)$ is the same as that of 
\[
\frac{1}{n}\sum_{k=0}^{n-1}e_{k}^{-m-n}e^{l\pi e_{k}/\sin\pi/n}.
\]
We can write this summation as 
\[
\frac{1}{n}\sum_{k=0}^{n-1}e_{k}^{-m-n}e^{lne_{k}}+\epsilon
\]
 where $\epsilon$ is a small number. The Taylor expansion gives 
\[
\frac{1}{n}\sum_{k=0}^{n-1}e_{k}^{-m-n}e^{lne_{k}}=\frac{1}{n}\sum_{k=0}^{n-1}e_{k}^{-m-n}\sum_{j=0}^{\infty}\frac{(lne_{k})^{j}}{j!}.
\]
With the fact that 
\[
\sum_{k}e_{k}^{a}=\begin{cases}
0 & \mbox{ if }n\nmid a\\
n(-1)^{a/n} & \mbox{ if }n|a
\end{cases},
\]
the double summation becomes
\[
\sum_{j=0}^{\infty}(-1)^{j+p+1}\frac{(ln){}^{jn+q}}{(jn+q)!}.
\]
With $l=h+q/n$, this summation is 
\begin{equation}
\sum_{j=0}^{\infty}(-1)^{j+p+1}\frac{(nh+q)^{nj+q}}{(nj+q)!}.\label{eq:altseries}
\end{equation}
By taking the natural logarithm, we leave it to the reader to check
that the absolute value of the summand increases from $j=0$ to $j=h$
and then decreases when $j>h$. Thus the alternating signs imply that
\[
\left|\sum_{j=0}^{h-1}(-1)^{j+p+1}\frac{(nh+q)^{nj+q}}{(nj+q)!}\right|\le\frac{(nh+q)^{n(h-1)+q}}{(n(h-1)+q)!},
\]
and 
\[
\left|\sum_{j=h+1}^{\infty}(-1)^{j+p+1}\frac{(nh+q)^{nj+q}}{(nj+q)!}\right|\le\frac{(nh+q)^{n(h+1)+q}}{(n(h+1)+1)!}.
\]
Since 
\[
\frac{(nh+q)^{n(h-1)+q}}{(n(h-1)+q)!}+\frac{(nh+q)^{n(h+1)+q}}{(n(h+1)+1)!}<\frac{(nh+q)^{nh+q}}{(nh+q)!},
\]
the sign of (\ref{eq:altseries}) is $(-1)^{h+p+1}$. The lemma follows.

\end{proof}

\begin{lemma}\label{alternatesigns}

Let 
\[
\theta=\frac{\pi}{n}-\frac{l\pi}{m},\qquad\mbox{where}\qquad l=h+\left\{ \frac{m}{n}\right\} ,
\]
$h$ is an non-negative integer, and $l\ge1$. The sign of 
\[
\sum_{k=0}^{n-1}\frac{1}{\zeta_{k}^{m+1}P'_{\theta}(\zeta_{k})}
\]
is $(-1)^{\left\lfloor m/n\right\rfloor -h+1}$ when $m$ is large. 

\end{lemma}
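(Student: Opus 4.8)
The plan is to obtain Lemma~\ref{alternatesigns} as a bookkeeping consequence of Lemmas~\ref{range1}, \ref{range2}, and~\ref{range3}. First I would record that, under the standing convention $m=np+r$ with $0\le r<n$, one has $\lfloor m/n\rfloor=p$ and $\{m/n\}=r/n$, so that $l=h+r/n$ and the asserted sign $(-1)^{\lfloor m/n\rfloor-h+1}$ is exactly $(-1)^{p-h+1}$, the sign occurring in each of those three lemmas. Moreover, writing $\theta=\pi/n-l\pi/m$, the hypothesis $l\ge1$ together with the implicit restriction $0<\theta<\pi/n$ (at $\theta=0$ the polynomial $P_\theta$ has a double root, so $h(\theta)$ is not even defined there) forces $h$ to be an integer with $1\le l<m/n$, i.e.\ $1\le h\le p-1$; in particular $l$ ranges over a discrete subset of $[1,m/n)$. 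So it suffices to show that for every such $l$ the sign of $\sum_{k=0}^{n-1}1/\bigl(\zeta_k^{m+1}P'_\theta(\zeta_k)\bigr)$ is $(-1)^{p-h+1}$ for all large $m$.

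Next I would fix, once and for all (depending only on $n$), constants making the three ranges of the sub-lemmas cover $[1,m/n)$. Choose $\delta_1>0$ small enough for Lemma~\ref{range1}, which treats $1\le l<\delta_1\sqrt m$. Pick $\delta_2\in(0,\delta_1)$ and apply Lemma~\ref{range2} with its parameter $\delta$ set to $\delta_2$, obtaining a small $\gamma_2>0$ --- which we may also shrink so that $\gamma_2<1/n$ --- treating $\delta_2\sqrt m<l<\gamma_2 m$. Finally pick $\gamma_3\in(0,\gamma_2)$ small enough for Lemma~\ref{range3}, which treats $\gamma_3 m<l<m/n$. Then for all large $m$ we have $\delta_2\sqrt m<\delta_1\sqrt m<\gamma_3 m<\gamma_2 m<m/n$, so the three intervals $[1,\delta_1\sqrt m)$, $(\delta_2\sqrt m,\gamma_2 m)$, $(\gamma_3 m,m/n)$ overlap consecutively and their union is all of $[1,m/n)$; the deliberate overlaps absorb the transition points $l=\delta_i\sqrt m$ and $l=\gamma_i m$, which are the only values not lying in a single sub-lemma's range. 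Hence every admissible $l$ falls in at least one of the three ranges, and applying the corresponding one of Lemmas~\ref{range1}--\ref{range3} yields the sign $(-1)^{p-h+1}=(-1)^{\lfloor m/n\rfloor-h+1}$, which is the claim.

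The only point that genuinely needs care is the covering step --- making sure the three half-open $l$-ranges leave no gap --- and this I have arranged above by choosing $\delta_2<\delta_1$ and $\gamma_3<\gamma_2$ so that consecutive ranges overlap, using that for large $m$ the cut points $\delta\sqrt m$ and $\gamma m$ are widely separated and lie strictly below $m/n$. There is no other obstacle: all of the analytic content --- the domination of $h(\theta)$ by its $k=0,1$ terms over the exponentially small terms with $k\ge2$, and in particular the delicate alternating-series estimate used to settle the case $n\ge90$ --- was already carried out in Lemmas~\ref{range1}, \ref{range2}, and~\ref{range3}, and the present statement merely splices them together after restating $p$ and $r$ as $\lfloor m/n\rfloor$ and $n\{m/n\}$.
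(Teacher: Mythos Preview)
Your proposal is correct and is exactly the paper's approach: the paper's own proof is the single sentence ``This lemma follows from Lemmas~\ref{range1}, \ref{range2}, and~\ref{range3},'' and you have simply filled in the bookkeeping it omits --- the identifications $\lfloor m/n\rfloor=p$, $\{m/n\}=r/n$, and the choice of constants $\delta_2<\delta_1$, $\gamma_3<\gamma_2<1/n$ so that the three $l$-ranges overlap and cover $[1,m/n)$. There is nothing to correct.
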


\begin{proof}

This lemma follows from Lemmas (\ref{range1}), (\ref{range2}), and
(\ref{range3}).

\end{proof}

\begin{lemma}\label{smalltheta}

The function $h(\theta)$ satisfies $h(0^{+})<0$. 

\end{lemma}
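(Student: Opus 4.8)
The plan is to compute the limit of $h(\theta)$ as $\theta\to 0^{+}$ directly, splitting the sum in (\ref{eq:Htheta}) into the two terms coming from the roots $\zeta_{0}=e^{-i\theta}$ and $\zeta_{1}=e^{i\theta}$ and the remaining $n-2$ terms, and showing that the first part tends to a strictly negative limit that dominates the (exponentially small) second part for large $m$.

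First I would handle the terms with $2\le k\le n-1$. As $\theta\to 0^{+}$ the coefficients $\sin n\theta/\sin\theta$ and $\sin(n-1)\theta/\sin\theta$ tend to $n$ and $n-1$, so $P_{\theta}(\zeta)\to P_{0}(\zeta)=\zeta^{n}-n\zeta+(n-1)$. By Lemma \ref{rootsP} applied at $\theta=0$, the roots of $P_{0}$ other than the double root at $1$ lie strictly outside the closed unit disk, and by Lemma \ref{rootsdistinct} they are simple, so $P_{0}'$ is nonzero at each of them. Since simple roots depend continuously on $\theta$, there are constants $\epsilon,\delta>0$ depending only on $n$ such that for all small $\theta>0$ we have $|\zeta_{k}|\ge 1+\epsilon$ and $|P'_{\theta}(\zeta_{k})|\ge\delta$ for $2\le k\le n-1$. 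Consequently $\bigl|\sum_{k=2}^{n-1}1/(\zeta_{k}^{m+1}P'_{\theta}(\zeta_{k}))\bigr|$, and hence its limit, is at most $(n-2)/\bigl(\delta(1+\epsilon)^{m+1}\bigr)$, which decays exponentially in $m$.

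For the two terms with $k=0,1$ I would use the expression already obtained in (\ref{eq:summainterms}), namely
\[
\frac{2n\cos(m+n)\theta-2\cos(m+1)\theta\,\sin n\theta/\sin\theta}{|P'_{\theta}(e^{i\theta})|^{2}}.
\]
Both numerator and denominator vanish to order $\theta^{2}$ at $\theta=0$, so I would Taylor-expand each. Using $\sin n\theta/\sin\theta=n-n(n^{2}-1)\theta^{2}/6+O(\theta^{4})$ one computes that the numerator equals $-\tfrac{2}{3}n(n-1)(3m+n+1)\theta^{2}+O(\theta^{4})$, while from $P'_{\theta}(e^{i\theta})=ne^{i(n-1)\theta}-\sin n\theta/\sin\theta$ one finds $|P'_{\theta}(e^{i\theta})|^{2}=n^{2}(n-1)^{2}\theta^{2}+O(\theta^{4})$. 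Hence the sum of the $k=0$ and $k=1$ terms tends to $-2(3m+n+1)/\bigl(3n(n-1)\bigr)$ as $\theta\to 0^{+}$.

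Putting the pieces together, $h(0^{+})=-2(3m+n+1)/\bigl(3n(n-1)\bigr)+E$ with $|E|\le (n-2)/\bigl(\delta(1+\epsilon)^{m+1}\bigr)$. The first term is bounded above by $-2m/\bigl(n(n-1)\bigr)$ and tends to $-\infty$, whereas $E\to 0$, so $h(0^{+})<0$ as soon as $m$ is large. I expect the only mildly delicate step to be the $0/0$ computation for the $k=0,1$ terms --- keeping track of the $\theta^{2}$-coefficients of numerator and denominator correctly --- which reduces to the Taylor expansion of $\sin n\theta/\sin\theta$; everything else is a direct consequence of Lemmas \ref{rootsP} and \ref{rootsdistinct} together with continuity of the roots in $\theta$.
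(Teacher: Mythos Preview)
Your proposal is correct and follows essentially the same approach as the paper: both isolate the $k=0,1$ contribution via (\ref{eq:summainterms}), Taylor-expand numerator and denominator at $\theta=0$, and discard the $k\ge 2$ terms using Lemmas \ref{rootsP} and \ref{rootsdistinct}. Your version is in fact more careful than the paper's, which only expands the numerator and leaves the denominator and the tail implicit; your explicit limit $-2(3m+n+1)/\bigl(3n(n-1)\bigr)$ is a nice sharpening.
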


\begin{proof}

\end{proof}

Equation (\ref{eq:summainterms}) in the proof of Lemma \ref{range3}
shows that the main term of $h(\theta)$ is 
\[
\frac{2n\cos(m+n)\theta-2\cos(m+1)\theta\sin n\theta/\sin\theta}{|P'_{\theta}(e^{i\theta})|^{2}}.
\]
The numerator of this is 
\begin{eqnarray*}
 &  & 2n\left(1-\frac{(m+n)^{2}\theta^{2}}{2}+O(m^{4}\theta^{4})\right)\\
 &  & -2n\left(1-\frac{(m+1)^{2}\theta^{2}}{2}+O(m^{4}\theta^{4})\right)(1+O(\theta^{2})).
\end{eqnarray*}
The claim follows.

\begin{lemma}\label{largetheta}

The sign of 
\[
h\left(\frac{\pi}{n}^{-}\right)
\]
is $(-1)^{p+1}$.

\end{lemma}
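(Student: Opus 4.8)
The plan is to analyze $h(\theta)$ as $\theta \to (\pi/n)^{-}$ by examining the limiting behavior of the roots $\zeta_0, \ldots, \zeta_{n-1}$ of $P_\theta(\zeta)$ at $\theta = \pi/n$. At this endpoint we have $\sin n\theta = 0$, so $P_{\pi/n}(\zeta) = \zeta^n + \sin((n-1)\pi/n)/\sin(\pi/n) = \zeta^n + 1$, whose roots are exactly the $n$-th roots of $-1$, namely $e_k = e^{(2k-1)i\pi/n}$, $0 \le k \le n-1$. In particular $\zeta_0 \to e_0 = e^{-i\pi/n}$ and $\zeta_1 \to e_1 = e^{i\pi/n}$; these are still the two ``distinguished'' roots $e^{\pm i\theta}$ evaluated at $\theta = \pi/n$, but now all $n$ roots lie on the unit circle, so there is no exponential decay to kill the contributions with $2 \le k \le n-1$. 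Every summand in $h(\theta)$ is of comparable size in the limit, so the sign must be read off from the full sum, not just the first two terms.

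First I would use Proposition \ref{qdiscform} (or direct computation) to evaluate $h(\pi/n)$ exactly. Since $P_{\pi/n}(\zeta) = \zeta^n + 1$, we have $P'_{\pi/n}(\zeta_k) = n\zeta_k^{n-1} = n e_k^{n-1} = -n e_k^{-1}$ (using $e_k^n = -1$). Hence
\[
h\!\left(\tfrac{\pi}{n}\right) = \sum_{k=0}^{n-1} \frac{1}{e_k^{m+1} \cdot (-n e_k^{-1})} = -\frac{1}{n}\sum_{k=0}^{n-1} e_k^{-m} = -\frac{1}{n}\sum_{k=0}^{n-1} e^{-(2k-1)i m\pi/n}.
\]
Writing $e_k^{-m} = e^{im\pi/n} e^{-2k i m\pi/n}$ and applying the orthogonality relation $\sum_{k=0}^{n-1} e^{-2kim\pi/n}$, which equals $n$ when $n \mid m$ and $0$ otherwise, I get that $h(\pi/n) = 0$ unless $n \mid m$. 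Since we have written $m = np + r$ with $0 \le r < n$, when $r \ne 0$ the value $h(\pi/n)$ vanishes and we must go to the next order in the expansion near $\theta = \pi/n$; when $r = 0$ we get $h(\pi/n) = -e^{im\pi/n} = -(-1)^p$, whose sign is $(-1)^{p+1}$, matching the claim.

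The main work, and the main obstacle, is therefore the case $r \ne 0$, where $h(\pi/n) = 0$ and we need the sign of $h(\theta)$ just to the left of $\pi/n$. For this I would substitute $\theta = \pi/n - l\pi/m$ with $l = r/n$ (the smallest admissible value, $h = 0$), so that $\theta \to (\pi/n)^{-}$ corresponds exactly to the regime already studied in Lemma \ref{range1} with $h = 0$: there we showed the sign of $h(\theta)$ is $(-1)^{p - 0 + 1} = (-1)^{p+1}$ when $m$ is large. More directly, plugging $l = r/n$ into the estimate \eqref{eq:bigthetasummand} and summing the $k = 0, 1$ terms gives the leading contribution $\tfrac{(-1)^{p+1}2}{n}(1 + O_n(1/m))$ from \eqref{eq:maintermsmalltheta}, while the remaining terms are bounded as in the proof of Lemma \ref{range1} (the computer-algebra check for $n < 90$ and the alternating-series argument for $n \ge 90$) by a strictly smaller quantity. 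Hence the sign of $h((\pi/n)^{-})$ is governed by the $k = 0, 1$ terms and equals $(-1)^{p+1}$ in all cases. The delicate point to get right is that the relevant limit ``$\theta \to (\pi/n)^-$'' for the purposes of the Intermediate Value Theorem argument is taken through this specific sequence of $\theta$-values with $l = h + r/n$, $h = 0$, which is precisely the boundary case of Lemma \ref{alternatesigns}, so no new estimate is actually needed beyond invoking Lemmas \ref{range1}--\ref{range3}.
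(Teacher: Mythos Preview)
Your treatment of the case $r=0$ is fine and agrees with the paper. The gap is in the case $r\neq 0$. You try to cover $\theta\to(\pi/n)^-$ by setting $h=0$, i.e.\ $l=r/n<1$, and then invoking Lemma~\ref{range1}. But Lemma~\ref{range1} carries the hypothesis $1\le l$, and that hypothesis is not cosmetic: the tail bound there reads
\[
\frac{1}{n}\sum_{k=2}^{n-1}\exp\!\Bigl(-l\pi\,\frac{\cos(\pi/n)-\cos((2k-1)\pi/n)}{\sin(\pi/n)}\Bigr),
\]
and the computer-algebra inequality that makes this strictly less than $2/n$ is stated (and only checked) at $l=1$, which is the worst case once $l\ge 1$. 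For $l=r/n<1$ the exponent shrinks, each summand tends to $1/n$, and the tail tends to $(n-2)/n$, which is \emph{not} smaller than $2/n$ for $n\ge 5$. In other words, near $\theta=\pi/n$ every one of the $n$ summands of $h(\theta)$ has modulus close to $1/n$; the terms $k=0,1$ do not dominate, so your ``main term plus smaller tail'' scheme breaks down precisely in the regime you need.

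What is actually happening as $l\to 0^+$ is cancellation, not domination. The paper's proof keeps all $n$ terms together, writes (via \eqref{eq:bigthetasummand})
\[
h(\theta)\sim \frac{e^{-l\pi\cot(\pi/n)}}{n}\sum_{k=0}^{n-1}e_k^{-(m+n)}\exp\!\Bigl(\frac{l\pi e_k}{\sin(\pi/n)}\Bigr),
\]
Taylor-expands the exponential in $l$, and uses the orthogonality relation $\sum_k e_k^{a}=0$ for $n\nmid a$ to kill every power $l^j$ with $j<r$. The first surviving term is $j=r$, and its coefficient is $(-1)^{p+1}/r!$ times a positive factor, which gives the sign. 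So the missing idea in your argument is exactly this orthogonality-driven cancellation among \emph{all} $n$ roots of $-1$; without it you cannot read off the sign of $h$ just to the left of $\pi/n$ when $r\neq 0$.
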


\begin{proof}

Let $m=pn+q$, $0\le q<n$. We consider equation (\ref{eq:bigthetasummand})
\begin{eqnarray*}
\zeta_{k}^{m+1}P_{\theta}'(\zeta_{k}) & = & ne_{k}^{m+n}e^{l\pi(\cos\pi/n-e_{k})/\sin\pi/n}\left(1+O_{n}\left(\frac{l^{2}}{m}\right)\right)
\end{eqnarray*}
where $\theta=\pi/n-l\pi/m$. By using the first $n$ terms of the
Taylor expansion for $l$ sufficiently small, we see that the sign
of $h(\theta)$ is equal to the sign of 
\[
\frac{1}{ne^{l\pi\cot\pi/n}}\sum_{k=0}^{n-1}e_{k}^{-(m+n)}\sum_{j=0}^{n-1}\frac{1}{j!}\left(\frac{l\pi e_{k}}{\sin\pi/n}\right)^{j}
\]
if this quantity is nonzero. By the fact that 
\[
\sum_{k}e_{k}^{a}=\begin{cases}
0 & \mbox{ if }n\nmid a\\
n(-1)^{a/n} & \mbox{ if }n|a
\end{cases},
\]
the expression becomes
\[
\frac{1}{e^{l\pi\cot\pi/n}}\frac{(-1)^{p+1}}{r!}.
\]
The lemma follows.

\end{proof}

We now provide an argument that shows the function $h(\theta)$ has
$p$ real roots on the interval $[0,\pi/n)$. Lemmas \ref{alternatesigns},
\ref{smalltheta}, and \ref{largetheta} show that the sign of $h(\theta)$
alternates when $\theta$ varies among $p+1$ values 
\[
\theta=\frac{\pi}{n}-\frac{l\pi}{m}
\]
 where $l=h+\{m/n\}$, $1\le h\le p-1$, and 
\[
\theta=0^{+},\frac{\pi}{n}^{-}.
\]
The claim then follows from the Intermediate Value Theorem.

Each real root $\theta\in[0,\pi/n)$ of $h(\theta)$ yields a certain
number of roots $z$ of $H_{m}(z)$ by Lemma \ref{HthetaLemma} and
(\ref{eq:zqconnection}). If $z$ is such a root, it lies on the curve
$\mathcal{C}$ since the imaginary part of $B^{n}(z)/A(z)$ is zero
and $0\le(-1)^{n}\Re\left(B^{n}(z)/A(z)\right)\le n^{n}/(n-1)^{n-1}$
by (\ref{eq:ChebyshevU}), (\ref{eq:zqtheta}), and (\ref{eq:thetaineq}).
The number of roots of $H_{m}(z)$ on $\mathcal{C}$ is at least the
degree of $H_{m}(z)$ by the argument after the proof of Lemma \ref{zerosB}.

We end this paper with an argument which shows the density of the
roots of $H_{m}(z)$ on $\mathcal{C}$. We first notice that the $p+1$
values of $\theta$ mentioned above are dense on the interval $[0,\pi/n]$
when $m\rightarrow\infty$. From (\ref{eq:zqtheta}), the rational
map $(-1)^{n}B^{n}(z)/A(z)$ maps an open neighborhood of a point
on $\mathcal{C}$ to an open set which contains a solution $\theta$
of $h(\theta)$ where $q=e^{2i\theta}$. Lemma \ref{HthetaLemma}
shows that there is a solution of $H_{m}(z)$ in $U$. The density
of the roots of $H_{m}(z)$ on $\mathcal{C}$ follows.


\begin{thebibliography}{10}
\bibitem[1]{aar}G. E. Andrews, R. Askey, and R. Roy, $\textit{Special Functions}$,
Cambridge University Press, Cambridge, 1999.

\bibitem[2]{apostal}T. M. Apostol, The resultants of the cyclotomic
polynomials $F_{m}(ax)$ and $F_{n}(bx)$, Math. Comp. 29 (1975),
1\textendash 6.

\bibitem[3]{bb}E. F. Beckenbach, R. E. Bellman, Inequalities, Springer,
Berlin, 1961.

\bibitem[4]{bkw} S. Beraha, J. Kahane, N. J. Weiss, Limits of zeroes
of recursively defined polynomials, Proc. Nat. Acad. Sci. U.S.A. 72
(1975), no. 11, 4209.

\bibitem[5]{bkw-1} S. Beraha, J. Kahane, N. J. Weiss, Limits of zeros
of recursively defined families of polynomials, Studies in foundations
and combinatorics, pp. 213\textendash 232, Adv. in Math. Suppl. Stud.,
1, Academic Press, New York-London, 1978. 

\bibitem[6]{boyergoh}R. Boyer, W. M. Y. Goh, On the zero attractor
of the Euler polynomials, Adv. in Appl. Math. 38 (2007), no. 1, 97\textendash 132.

\bibitem[7]{boyergoh-1}R. Boyer, W. M. Y. Goh, Polynomials associated
with partitions: asymptotics and zeros, Special functions and orthogonal
polynomials, 33\textendash 45, Contemp. Math., 471, Amer. Math. Soc.,
Providence, RI, 2008.

\bibitem[8]{cc}M. Charalambides, G. Csordas, The distribution of
zeros of a class of Jacobi polynomials, Proc. Amer. Math. Soc. 138
(2010), no. 12, 4345\textendash 4357.

\bibitem[9]{dilcherstolarsky}K. Dilcher and K. B. Stolarsky, Resultants
and discriminants of Chebyshev and related polynomials, Trans. Amer.
Math. Soc. 357 (2005), no. 3, 965\textendash 981.

\bibitem[10]{dns}K. Dilcher, J. D. Nulton, K. B. Stolarsky, The zeros
of a certain family of trinomials, Galsgow Math. J. 34 (1992), 55\textendash 74.

\bibitem[11]{gkz}I. M. Gelfand, M. M. Kapranov, A. V. Zelevinsky,
Determinants, Resultants, and Multidimensional Determinants, Birkh�user
Boston, 1994.

\bibitem[12]{gisheismail}J. Gishe and M. E. H. Ismail, Resultants
of Chebyshev polynomials, Z. Anal. Anwend. 27 (2008), no. 4, 499\textendash 508.

\bibitem[13]{hlp}G. H. Hardy, J. E. Littlewood, G. Polya, Inequalities,
Cambridge University Press, 1964.

\bibitem[14]{hs}M. X. He, E. B. Saff, The zeros of Faber polynomials
for an m-cusped hypocycloid, J. Approx. Theory 78 (1994), no. 3, 410\textendash 432.

\bibitem[15]{ismail}M. E. H. Ismail, Difference equations and quantized
discriminants for $q$-orthogonal polynomials, Adv. in Appl. Math.
30 (2003), no. 3, 562\textendash 589.

\bibitem[16]{tran} K. Tran, Connections between discriminants and
the root distribution of polynomials with rational generating function,
J. Math. Anal. Appl. 410 (2014), 330\textendash 340.\end{thebibliography}
\end{document}